\def\R{\mathbb{R}}
\def\Q{\mathbb{Q}}
\def\Z{\mathbb{Z}}
\def\N{\mathbb{N}}
\def\O{\mathcal{O}}
\def\nm{\lVert\cdot\rVert}
\def\vol{\widehat{\mathrm{vol}}}
\def\Div{\mathrm{Div}}
\def\mumin{\widehat{\mu}_{\min}}
\def\mumax{\widehat\mu_{\max}}
\def\limnto{\lim\limits_{n\rightarrow +\infty}}
\def\ot{\otimes}
\def\shfL{\mathscr{L}}
\def\shfM{\mathscr{M}}
\def\ardeg{\widehat{\mathrm{deg}}}
\def\degH{\mathrm{deg}_{\mathcal{H}}}
\def\rank{\mathrm{rk}}
\def\ovl{\overline}
\def\scrX{\mathscr{X}}
\def\xanomega{X^{\mathrm{an}}_\omega}
\definecolor{mred}{rgb}{0.83, 0.0, 0.0}
\definecolor{darkspringgreen}{rgb}{0.09, 0.45, 0.27}
\definecolor{ruby}{rgb}{0.88, 0.07, 0.37}
\def\colorsout#1{\bgroup\markoverwith{\textcolor{#1}{\rule[0.5ex]{2pt}{0.7pt}}}\ULon} 
\def\coloruline#1{\bgroup\markoverwith{\textcolor{#1}{\rule[-0.5ex]{2pt}{0.7pt}}}\ULon} 
\newcommand{\rest}[2]{\left.{#1}\right\vert_{{#2}}}
\newtheorem{claim}{Claim}
\newcommand\tint{\mathop{\mathpalette\tb@int{t}}\!\int}
\newcommand\bint{\mathop{\mathpalette\tb@int{b}}\!\int}
\newcommand\tb@int[2]{%
  \sbox\z@{$\m@th#1\int$}%
  \if#2t%
    \rlap{\hbox to\wd\z@{%
      \hfil
      \vrule width .35em height \dimexpr\ht\z@+1.4pt\relax depth -\dimexpr\ht\z@+1pt\relax
      \kern.05em 
    }}
  \else
    \rlap{\hbox to\wd\z@{%
      \vrule width .35em height -\dimexpr\dp\z@+1pt\relax depth \dimexpr\dp\z@+1.4pt\relax
      \hfil
    }}
  \fi
}
\newcommand*\suppresschapternumber{%
  \let\@makechapterhead\@makeschapterhead
  \patchcmd{\@chapter}
    {\protect\numberline{\thechapter}}
    {}
    {}{}%
}
\newcommand*\removedotbetweenchapterandsection{%
  \renewcommand\thesection{\thechapter\@arabic\c@section}%
}
\newtheorem{theorem}{Theorem}
\title{Slope boundedness and equidistribution theorem}
\author{Wenbin LUO\thanks{
This work was supported by JSPS KAKENHI Grant Number JP20J20125.}}
\address{Beijing International Center for Mathematical Research, Peking University,
Beijing 100871, China}
\date{\today}
\begin{document}
\begin{abstract}
In this article, we prove the boundedness of minimal slopes of adelic line bundles over function fields of characteristic $0$. This can be applied to prove the equidistribution of generic and small points with respect to a big and semipositive adelic line bundle.  Our methods can be applied to the finite places of number fields as well. We also show the continuity of $\chi$-volumes over function fields.
\end{abstract}
\maketitle
\tableofcontents
\section*{Introduction}
\subsection*{Backgrounds} The equidistribution theorem is an important result for algebraic dynamical systems. It was originally proved by Szpiro, Ullmo and Zhang \citep{SUZ,ullmo1998positivite, zhang1998equidistribution} over number fields in order to prove the Bogomolov conjecture. In \citep{Yuan_2008}, Yuan proved the equiditribution theorem over number fields (including finite places) by using his arithmetic analogue of Siu's inequality. Faber and Gubler \citep{Faber_2009,gubler2008equidistribution} transferred Yuan's result to function fields. In \citep{moriwaki2000arithmetic}, Moriwaki gave a generalization of equidistribution theorem over an arithmetic function field, that is, a finitely generated field over $\Q$.

Here we briefly introduce the application of the equidistribution theorem over function fields. Let $K$ be a function field of a projective normal variety $B$ and $X$ be a projective $K$-variety of dimension $d$. Consider an automorphism $f:X\rightarrow X$ such that there is an ample line bundle $L$ satisfying $f^*L\simeq mL$ for some $m>1$. For each $1$-codimensional point $\omega\in B$, by applying Tate's limit process\citep[9.5]{BombieriGubler2006Diophan}, we can obtain a canonical metric $\phi_\omega$ of $L$ on the Berkovich analytification $\xanomega$ of $X$ with respect to the absolute value given by $\omega$. We say a point $x\in X(\ovl K)$ is \textit{preperiodic} if the orbit $\{f^n(x)\}_{n\in\N}$ is finite. Then the equidistribution theorem over function fields\citep{Faber_2009,gubler2008equidistribution} tells us that the preperiodic points are "equidistributed" on the Berkovich space $\xanomega$ with respect to the Chambert-Loir measure given by $\phi_\omega$. If $K$ is a number field, we encounter the Archimedean places, over which we just use the measure given by the first Chern class of a semi-postive smooth Hermitian metric. Chen and Moriwaki recently proved a result over adelic curves assuming $L$ is semiample \citep{huayimoriwaki_equi}. In this article, we consider the case that $L$ is nef and big. 
\subsection*{Adelic point of view}
We start with a general framework over adelic curves which was introduced by Chen and Moriwaki\citep{adelic}. An \textit{adelic curve} consists of a field $K$, a measure space $(\Omega, \mathcal A, \nu)$ and a set $\{\lvert\cdot\rvert_\omega\}_{\omega\in\Omega}$ of $K$'s absolute values. These structures can be constructed for number fields, function fields and countably generated fields over $\Q$.
In this article, we assume that the adelic curve is proper, that is, it satisfies a product formula (see subsection \ref{sub_adelic} for details). For each $\omega\in\Omega$, we denote by $K_\omega$ the completion of $K$ with respect to $\lvert\cdot\rvert_\omega$. We may further assume that $\Omega$ is discrete or $K$ admits a countable subfield dense in every completion $K_\omega$. 

Let $\pi:X\rightarrow \mathrm{Spec}K$ be a $d$-dimensional, projective, normal and geometrically reduced variety and $L$ be a line bundle over $X$. For each $\omega\in\Omega$, we equip $L$ with a continuous metric $\phi_\omega$ on the Berkovich analytification $\xanomega$ of $X\times_{\mathrm{Spec}K}\mathrm{Spec} K_\omega$ with respect to $\lvert\cdot\rvert_\omega$. We define the pair $\ovl L=(L,\phi:=\{\phi_\omega\})$ as an \text{adelic line bundle} if it satisfies the conditions described in \citep[6.1]{adelic}. Here we briefly recall the definition of an adelic line bundle. For simplicity, we assume that $\forall\omega\in\Omega$, $\lvert\cdot\rvert_\omega$ is non-trivial i.e. $\lvert x\rvert_\omega\not=1$ for some $x\in K\setminus\{0\}$. We can write $\ovl L$ as $\ovl L=\ovl H_1-\ovl H_2+(\O_X, \{f_{\omega}\}_{\omega\in\Omega})$, where $\ovl H_i(i=1,2)$ are very ample line bundles equipped with Fubini-Study metric families given as in \cite[6.1.1]{adelic}, and $f_\omega$ are continuous functions. Note that for any algebraic extension $K'/K$, we naturally have an adelic structure $(K',(\Omega_{K'},\mathcal A_{K'},\nu_{K'}),\{\lvert\cdot\rvert_{\omega\in\Omega_{K'}}\})$ together with a map $\pi_{K'/K}:\Omega_{K'}\rightarrow \Omega$ such that $\pi_{K'/K}^{-1}(\omega)$ is set-theoretically identical with $(\mathrm{Spec}K')_{\omega}^{\mathrm{an}}$(for example, see \cite[3.4]{adelic} or subsection \ref{sub_height} of this paper). We say $\ovl L$ is an \textit{adelic line bundle} if \begin{enumerate}
    \item[\textnormal{(i)}] the function $(\omega\in\Omega)\mapsto \sup\limits_{x\in \xanomega}\lvert f_\omega(x)\rvert_\omega$ is $\nu$-measurable and dominated, and
    \item[\textnormal{(ii)}] for any closed point $p\in X$, let $K'$ be the residue field of $P$, the function $(\omega\in \Omega_{K'})\mapsto f_{\pi_{K'/K}(\omega)}(\omega)$ is $\mathcal A_{K'}$-measurable.
\end{enumerate}
We say $\phi$ is \textit{semipositive} if the following conditions are satisfied:
\begin{enumerate}
    \item If $\lvert\cdot\rvert_\omega$ is non-Archimedean, then $\phi_\omega$ is a uniform limit of nef model metrics.
    \item If $\lvert\cdot\rvert_\omega$ is Archimeden, then $\phi_\omega$ is of semi-positive Chern current.
\end{enumerate}

We define the adelic pushforward $\pi_*\ovl L$ of $\ovl L$ as the pair $(H^0(X,L),\xi_{\phi}:=\{\nm_{\phi_\omega}\}_{\omega\in\Omega})$, where each $\nm_{\phi_\omega}$ is the supnorm induced by $\phi_\omega$. 
Analogous to the degrees and minimal slopes of torsion free coherent sheaves, we can define the \textit{Arakelov degree} $\ardeg(\pi_*\ovl L)$ and \textit{minimal slope} $\mumin(\pi_*\ovl L)$.
We define the \textit{$\chi$-volume} of $\ovl L$ as 
\begin{equation}\label{eq_vol_chi_intro}
    \vol_\chi(\ovl L):=\limsup_{n\rightarrow+\infty}\frac{\ardeg(\pi_*(n\ovl L))}{n^{d+1}/(d+1)!}
\end{equation}
If $L$ is big and the \textit{asymptotic minimal slope} $\displaystyle\mumin^{\mathrm{asy}}(\ovl L):=\liminf\limits_{n\rightarrow+\infty}\frac{\mumin(\pi_*(n\ovl L))}{n}$ lies in $\R$, then the limit superior in \eqref{eq_vol_chi_intro} is actually a limit. This makes it possible to prove the differentiability and concavity of $\vol_\chi(\cdot)$.

Note that a continuous metric on $\O_{\xanomega}$ and a continuous function on $\xanomega$ are essentially the same thing.
Let $C_{\Omega}(X)$ be the set of continuous function families $f=\{f_\omega\}_{\omega\in\Omega}$ such that $(\O_X,f)$ is an adelic line bundle. For an adelic line bundle $\ovl L=(L,\phi)$ such that $L$ is big and $\phi$ is semipositive, we can define a linear functional $\mu_{\ovl L}:C_{\Omega}(X)\rightarrow \R$ associated to $\ovl L$. 

Let $\ovl K$ be the algebraic closure of $K$, then for any adelic line bundle $\ovl L$, we can define a height function $h_{\ovl L}:X(\ovl K)\rightarrow \R$. Conversely, for any $x\in X(\ovl K)$ we define the linear functional $\mu_x$ on $C_{\Omega}(X)$ as $\mu_x(f):=h_{(\O_X,f)}(x)$. We consider an infinite directed set $I$, that is, a set $I$ together with a binary relation $\leq$ such that 
\begin{enumerate}
    \item[\textnormal{(a)}] $\iota\leq \iota$ for any $\iota\in I$.
    \item[\textnormal{(b)}] If $\iota\leq \iota'$ and $\iota'\leq \iota''$, then $\iota\leq\iota''$.
    \item[\textnormal{(c)}] For any $\iota,\iota'\in I$, there always exists an $\iota''\in I$ such that $\iota\leq\iota''$ and $\iota'\leq\iota''$.
\end{enumerate}
Let $\{x_{\iota}\in X(\ovl K)\}_{\iota\in I}$ be a set of algebraic points indexed by $I$. We say the set is a \textit{generic net} if for any proper closed subset $Y\subsetneq X$, there exists an $\iota_0\in I$ such that for any $\iota\geq \iota_0$, we have $x_\iota\not\in Y$.

We prove an equidistribution theorem over an adelic curve after assuming the boundedness of minimal slopes of a big and semipositive adelic line bundle.
\begin{theorem}[cf. Theorem \ref{theo_equidistribution_big}]\label{theo_equi_intro}
Let $\ovl L=(L,\phi)$ be an adelic line bundle such that $L$ is big and nef, $\phi$ is semipositive and $\mumin^{\mathrm{asy}}(\ovl L)> -\infty$.
Let $\{x_\iota\in X(\ovl K)\}_{\iota\in I}$ be a generic net of algebraic points on $X$ such that 
$$\lim_{\iota\in I} h_{\ovl L}(x_\iota)= \frac{\vol_\chi(\ovl L)}{(d+1)\mathrm{vol}(L)},$$
that is, for any $\epsilon>0$, there exists $\iota_0\in I$ such that for any $\iota\geq \iota_0$, we have $\displaystyle\lvert h_{\ovl L}(x_\iota)-\frac{\vol_\chi(\ovl L)}{(d+1)\mathrm{vol}(L)}\rvert<\epsilon$.
Then for any $f\in C_\Omega(X)$, we have $\{\mu_{x_\iota}(f)\}_{\iota\in I}$ converges to $\mu_{\ovl L}(f).$
\end{theorem}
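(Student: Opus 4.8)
The plan is to run the variational argument of Szpiro--Ullmo--Zhang in the form refined by Yuan, with the differentiability of the $\chi$-volume playing the role of the arithmetic Siu inequality. I would reduce the whole statement to a two-sided squeeze
\[
\mu_{\ovl L}(f)\ \le\ \liminf_{\iota\in I}\mu_{x_\iota}(f)\ \le\ \limsup_{\iota\in I}\mu_{x_\iota}(f)\ \le\ \mu_{\ovl L}(f),
\]
obtained by perturbing the metric of $\ovl L$ by $\pm t f$ and letting $t\to 0^+$. Since $\mu_x$ and $\mu_{\ovl L}$ are linear, replacing $f$ by $-f$ is legitimate, so it suffices to prove the single inequality $\liminf_\iota\mu_{x_\iota}(f)\ge \mu_{\ovl L}(f)$ for every $f\in C_\Omega(X)$.

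Concretely, for $t>0$ I would introduce the perturbed adelic line bundle $\ovl L(tf):=(L,\phi+tf)$, whose underlying bundle is still the big $L$; by additivity of heights in the metric, $h_{\ovl L(tf)}(x)=h_{\ovl L}(x)+t\,\mu_x(f)$ for all $x\in X(\ovl K)$. Two inputs are then combined. First, because $\{x_\iota\}$ is a generic net it eventually avoids any fixed proper closed subset, so for any adelic line bundle $\ovl M$ with big underlying bundle one has $\liminf_\iota h_{\ovl M}(x_\iota)\ge e_1(\ovl M)$, where $e_1$ is the essential minimum of the height. Second, the Zhang-type lower bound $e_1(\ovl M)\ge \vol_\chi(\ovl M)/((d+1)\mathrm{vol}(L))$, valid in the adelic-curve framework for big $\ovl M$, applies to $\ovl M=\ovl L(tf)$. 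Writing $A:=\vol_\chi(\ovl L)/((d+1)\mathrm{vol}(L))$ and using that $\lim_\iota h_{\ovl L}(x_\iota)=A$ exists,
\[
A+t\liminf_\iota\mu_{x_\iota}(f)=\liminf_\iota h_{\ovl L(tf)}(x_\iota)\ \ge\ \frac{\vol_\chi(\ovl L(tf))}{(d+1)\mathrm{vol}(L)},
\]
and subtracting $A$ and dividing by $t>0$ gives
\[
\liminf_\iota\mu_{x_\iota}(f)\ \ge\ \frac{1}{(d+1)\mathrm{vol}(L)}\cdot\frac{\vol_\chi(\ovl L(tf))-\vol_\chi(\ovl L)}{t}.
\]

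The final step is to let $t\to 0^+$. Here the hypothesis $\mumin^{\mathrm{asy}}(\ovl L)>-\infty$ is essential: it guarantees, via the differentiability of $\vol_\chi$ established earlier, that $t\mapsto\vol_\chi(\ovl L(tf))$ is differentiable at $t=0$ with derivative $(d+1)\mathrm{vol}(L)\,\mu_{\ovl L}(f)$, which is exactly the normalization of the functional $\mu_{\ovl L}$. The difference quotient therefore tends to $\mu_{\ovl L}(f)$, yielding $\liminf_\iota\mu_{x_\iota}(f)\ge\mu_{\ovl L}(f)$; applying the same to $-f$ and using linearity gives $\limsup_\iota\mu_{x_\iota}(f)\le\mu_{\ovl L}(f)$, and the two bounds close the argument.

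I expect the main obstacle to lie not in the elementary squeeze but in legitimately applying the differentiability of $\vol_\chi$ to the perturbed bundles $\ovl L(tf)$. For a general $f\in C_\Omega(X)$ the twisted metric $\phi+tf$ need not be semipositive, so one must verify that $\ovl L(tf)$ still belongs to the class for which $\vol_\chi$ is a genuine limit and is differentiable — i.e. that $\mumin^{\mathrm{asy}}(\ovl L(tf))$ stays finite for small $t$, which should follow from $\mumin^{\mathrm{asy}}(\ovl L)>-\infty$ together with the boundedness of $f$. If differentiability is first proven only for $f$ that are differences of semipositive (or model) metrics, an approximation argument dense in $C_\Omega(X)$ will be needed to reach arbitrary continuous $f$, and controlling the error uniformly along the net is the delicate point; one must likewise ensure the Zhang-type inequality persists for these non-semipositive perturbations.
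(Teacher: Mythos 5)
Your proposal is correct and is essentially the paper's own argument: the paper likewise perturbs the metric to $\phi+tf$, combines the lower bound $\liminf_{\iota}h_{(L,\phi+tf)}(x_\iota)\ge \vol_\chi(L,\phi+tf)/((d+1)\mathrm{vol}(L))$ for generic nets with the differentiability of $t\mapsto\vol_\chi(L,\phi+tf)$ at $t=0$ established in \eqref{derivative_vol_S}, and closes the squeeze by citing the variational lemma of Boucksom et al. The caveats in your last paragraph are already resolved in the paper's Section 2.1, where differentiability is proved directly for arbitrary $f\in C_\Omega(X)$ and where it is observed that finiteness of $\mumin^{\mathrm{asy}}$ (hence the limit property of $\vol_\chi$) is stable under bounded twists of the metric on the same underlying bundle $L$.
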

\begin{rema}
    Note that the condition that $L$ is nef is actually implied by the condition that $L$ admits semipositive metrics. Therefore we may omit this condition just for concision.
\end{rema}
In the case that $L$ is semiample, the condition $\mumin^{\mathrm{asy}}(\ovl L)> -\infty$ is automatically satisfied, a similar result was proved in \citep{huayimoriwaki_equi}. Therefore the importance of asymptotic minimal slopes rises to the surface. Note that this have been proved over number fields due to Ikoma \citep{ikoma_boundedness}, hence our result extend the equiditribution theorem in \citep{Boucksom_Growth} to the finite places of a number fields.

In this article, we will prove the boundedness of slopes for any adelic line bundle over a function field of characteristic zero. Therefore in this case, we extend the results of X. Faber\citep{Faber_2009} and W. Gubler\citep{gubler2008equidistribution}. 
\subsection*{Boundedness of minimal slopes} 
We consider a normed \textit{graded linear series}, that is, for each $m>0$, we equip a norm famliy $\xi^{(m)}=\{\nm^{(m)}_{\omega}\}$ on $H^0(X,mL)$ where $L$ is a line bundle. We further assume that each $\xi^{(m)}$ satisfies the dominancy and measurablity conditions in \citep[4.1]{adelic}, so that the minimal slope of $\ovl E_m:=(H^0(X,mL),\xi^{(m)})$ is well-defined. By imitating Ikoma's technique of estimating the successive minima over number fields, we assume the following condition:

For any $s\in E_n\setminus\{0\}$, there exist functions $(\omega\in\Omega)\mapsto \tau_\omega(s),\sigma_\omega(s)\in\R_{>0}$ depending on $s$ only such that their logarithms are $\nu$-integrable, and that we have
    \begin{equation}
        \tau_\omega(s)^{k+m}\lVert t\rVert^{(m)}_\omega\leq \lVert s^k\cdot t\rVert^{(kn+m)}_\omega \leq \sigma_\omega(s)^{k+m}\lVert t\rVert^{(m)}_\omega
    \end{equation}
for any $m\in\N$ and $t\in E_m\setminus\{0\}$.
If such a condition is satisfied, we say $\{\ovl E_m\}_{m\in\N}$ is \textit{of bounded type}. In section \ref{sect_bound}, we will prove that  $$\liminf\limits_{n\rightarrow+\infty}\frac{\mumin(\ovl E_n)}{n}>-\infty$$
for any normed graded linear series of bounded type. Moreover, if we construct our adelic curve $S$ from a projective model $B$ of a function field $K$ of characteristic $0$ as in section \ref{sec_fun_fld}, then we show that $\{\pi_*(n\ovl L)\}$ is of bounded type for any adelic curve $\ovl L$. Hence Theorem \ref{theo_equi_intro} works for any big and semipositive adelic line bundles over a function field. 

In fact, if the $\sigma$-algebra $\mathcal A$ is discrete, then for any place $\omega$ of $K$, $C(\xanomega)$ can be viewed as a subset of $C_\Omega(X)$. Indeed, we can identify $f\in C(\xanomega)$ with the family whose element is $f$ at $\omega$ and $0$ elsewhere. We set $\mu_{x,\omega}:=\rest{\mu_x}{{C(\xanomega)}}$ and $\mu_{\ovl L,\omega}:=\rest{\mu_{\ovl L}}{{C(\xanomega)}}$. They are measures on $\xanomega$ of total mass $\nu(\omega)$. 
If $K$ is a number field, all the reasoning above works well due to Remark \ref{rema_num_corps}. Therefore in the case over a function field or a number field, we see that Theorem \ref{theo_equi_intro} can be restated as follows:
\begin{theorem}[cf. Theorem \ref{theo_equi_fun} and Remark \ref{rema_num_corps}]
Let $\ovl L=(L,\phi)$ be an adelic line bundle such that $L$ is big and $\phi$ is semipositive.
Let $\{x_\iota\in X(\ovl K)\}_{\iota\in I}$ be a generic net of algebraic points on $X$ such that 
$$\lim_{\iota\in I} h_{\ovl L}(x_\iota)= \frac{\vol_\chi(\ovl L)}{(d+1)\mathrm{vol}(L)}.$$
Then for any $\omega\in\Omega$, we have $\{\mu_{x_\iota,\omega}\}_{\iota\in I}$ converges weakly to $\mu_{\ovl L,\omega}.$
\end{theorem}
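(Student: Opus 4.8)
The plan is to obtain this statement as a specialization of the general equidistribution result Theorem \ref{theo_equi_intro}, once its slope hypothesis is shown to be automatic in the function field (and number field) setting, and once the convergence of functionals on $C_\Omega(X)$ is read off place by place.

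First I would verify that $\mumin^{\mathrm{asy}}(\ovl L) > -\infty$, so that the extra hypothesis of Theorem \ref{theo_equi_intro} may be dropped. By the slope boundedness established above, this reduces to checking that $\{\pi_*(n\ovl L)\}$ is of bounded type; and when the adelic curve is constructed from a projective model $B$ of a characteristic-zero function field $K$, this is exactly what was proved in the section on boundedness of minimal slopes. For number fields the same conclusion holds by Remark \ref{rema_num_corps}, with the Archimedean places treated via the first Chern form of the chosen non-negatively curved smooth Hermitian metric. Consequently Theorem \ref{theo_equi_intro} applies to an arbitrary big and semipositive $\ovl L$ with no slope assumption, and we obtain, for every $f \in C_\Omega(X)$, the convergence $\mu_{x_\iota}(f) \to \mu_{\ovl L}(f)$ under the stated genericity and height conditions on the net $\{x_\iota\}$.

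It then remains to pass to the place-by-place measures. Fix $\omega \in B^{(1)}$. By definition $\mu_{x_\iota,\omega}$ and $\mu_{\ovl L,\omega}$ are the restrictions of $\mu_{x_\iota}$ and $\mu_{\ovl L}$ to the subspace $C(\xanomega) \subseteq C_\Omega(X)$. Hence for every $g \in C(\xanomega)$ the convergence just obtained specializes to $\int g \, d\mu_{x_\iota,\omega} \to \int g \, d\mu_{\ovl L,\omega}$. Since $\xanomega$ is a compact Berkovich space and both $\mu_{x_\iota,\omega}$ and $\mu_{\ovl L,\omega}$ are positive Radon measures of total mass $\nu(\omega)$, testing against all continuous $g$ is precisely the definition of weak convergence, and the claim follows.

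I expect the essential difficulty to be concentrated entirely in verifying that $\{\pi_*(n\ovl L)\}$ is of bounded type, since it is this that removes the slope hypothesis and renders Theorem \ref{theo_equi_intro} unconditional over function fields; this is, however, the content of the boundedness results assumed above. Granting those, the present statement is a formal consequence, the only genuine bookkeeping being the compatibility of the embedding $C(\xanomega) \hookrightarrow C_\Omega(X)$ with the two families of measures, and the reduction of weak convergence on the compact space $\xanomega$ to convergence against each fixed continuous test function.
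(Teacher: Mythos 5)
Your proposal is correct and follows essentially the same route as the paper: the slope hypothesis of Theorem \ref{theo_equi_intro} is discharged by the bounded-type verification for $\{\pi_*(n\ovl L)\}$ over function fields (together with the comparison $\mumin^{\mathrm{asy}}(L,\psi)\geq\mumin^{\mathrm{asy}}(L,\phi_{\shfL})-d(\psi,\phi_{\shfL})$ for a general metric family), and the place-by-place weak convergence is read off from the identification $\{f\in C_{B^{(1)}}(X)\mid f\cdot\mathds{1}_{\{\omega\}}=f\}\simeq C(\xanomega)$ exactly as the paper does. No gaps.
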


Moreover, the boundedness of minimal slope will lead to the continuity of $\chi$-volume, by which we can show that $\displaystyle\frac{\vol_\chi(\ovl L)}{(d+1)\mathrm{vol}(L)}$ is nothing but the \textit{height} $h_{\ovl L}(X)$ of $X$ with respect to $\ovl L$ in the sense of intersection theory.

\subsection*{Organization of the paper}
In the first two sections, we consider the very general case, that is, equidistribution theorem over an adelic curve by assuming the boundedness of minimal slopes. In section 3, we provide an adelic version of Ikoma's method to prove the slope boundedness under certain conditions. In section 4,  we show that these conditions are satisfied over function fields. As a byproduct, we prove the continuity of $\chi$-volumes over function fields which leads to a Hilbert-Samuel formula.
\section*{Notation and conventions}
\noindent{\bf 1. } Let $K$ be a field equipped with an absolute value $\lvert\cdot\rvert$. Let $E$ be a finite-dimensional vector space over $K$ and $\nm$ be a norm on $E$.
Let $f:F\rightarrow E$ be an injective linear map of $K$-spaces, we denote by $\nm_f$ the restriction norm of $\nm$ on $F\simeq Im(f)$.
Let $G=E/Im(f)$ and $g:E\rightarrow G$ be the canonical map. Then for any $s\in G$, we define the \textit{quotient seminorm} $\lVert\cdot\rVert_{\bf q}$ by
$$\lVert s\rVert_{\bf q}:=\inf_{t\in g^{-1}(s)} \lVert t\rVert.$$
If $K$ is complete, then $\nm_{\bf q}$ is a norm.

\noindent{\bf 2. }Let $r=\dim_K E$ and $\mathrm{det}E$ be the determinant $\wedge^{r}E$ of $E$. The \textit{determinant norm} $\nm_{\mathrm{det}}$ is defined as
$$\lVert \lambda\rVert_{\mathrm{det}}=\inf_{\lambda=s_1\wedge\cdots\wedge s_r}\lVert s_1\rVert\cdots\lVert s_r\rVert.$$
Let $\lVert\cdot\rVert,\lVert\cdot\rVert'$ be norms on $E$. We define the \textit{relative volume} as
$$\mathrm{vol}(\lVert\cdot\rVert,\lVert\cdot\rVert'):=\ln\frac{\lVert\lambda\rVert'_{\mathrm {det}}}{\lVert\lambda\rVert_{\mathrm {det}}}$$
where $\lambda\in\mathrm{det}E\setminus \{0\}$.

\noindent {\bf3. }Let $k$ be a field. A $k$-variety means an integral separated scheme of finite type over $k$. Let $X$ be a projective $k$-variety, and $L$ be a line bundle over $X$. We say $L$ is \textit{strictly effective} if $h^0(X,L)>0$. Let $Y$ be a closed subscheme of $X$, we denote by $H^0(X|Y,L)$ the image of $H^0(X,L)\rightarrow H^0(Y,L|_Y)$. We say $Y$ is \textit{smooth} if the structure morphism $Y\rightarrow \mathrm{Spec}k$ is smooth. Notice that if $\dim Y=0$ then this implies that $Y$ is reduced. Let $Z$ be a closed subset of $X$. We say $Y$ \textit{avoids} $Z$ if one of the following two conditions is satisfied: \begin{enumerate}
    \item[(i)] $Y\not\subset Z$ and $\dim Y>0$. 
    \item[(ii)] $Y\cap Z=\emptyset$ and $\dim Y=0$.
\end{enumerate}
\section{Review of Arakelov geometry over adelic curves}
\subsection{Adelic curves and adelic vector bundles}\label{sub_adelic}
Let $K$ be a field and $(\Omega,\mathcal A, \nu)$ be a measure space. Let $M_K$ be the set of absolute values on $K$. If there exists a map $\varphi:(\omega\in\Omega)\mapsto \lvert\cdot\rvert_\omega\in M_K$ such that for any $a\in K\setminus\{0\}$, the function $\omega\mapsto \ln\lvert a\rvert_\omega$ is $\nu$-integrable, then we say the structure $S=(K,(\Omega,\mathcal A,\nu),\varphi)$ is an \textit{adelic curve}. Moreover, we say $S$ is \textit{proper} if the integral is always $0$. Throughout this article, we assume that $S$ is proper. For each $\omega\in\Omega$, we denote by $K_\omega$ the completion of $K$ with respect to $\lvert\cdot\rvert_\omega$.

Now let $E$ be a finite-dimensional $K$-space. For each $\omega\in\Omega$, let $\nm_\omega$ be a norm on $E\ot K_\omega$. We consider a norm family $\xi=\{\nm_\omega\}$ satisfying certain measurability and dominancy conditions \citep[4.1]{adelic}, that is, $\forall s\in E\setminus\{0\}$, $\omega\mapsto \ln\lVert s\rVert_\omega$ is $\mathcal A$-measurable and 
$$\tint_\Omega \ln\lVert s\rVert_\omega\nu(d\omega)<+\infty,$$
and the same holds for the dual $(E^\vee,\xi^\vee=\{\nm_\omega^\vee\}).$
We say the pair $\ovl E=(E,\xi)$ is an \textit{adelic vector bundle}. For any section $s\in E\setminus\{0\}$, we define the \textit{degree} of $s$ as
$$\ardeg_{\xi}(s):=-\int_\Omega\ln\lVert s\rVert_{\omega}\nu(d\omega).$$

We can consider the determinant bundle $\det \ovl E=(\det E,\det \xi:=\{\nm_{\omega,\det}\})$ where each $\nm_{\omega,\det}$ is the determinant norm. If $E\not=0$, then the \textit{Arakelov degree} of $\ovl E$ is defined as
$$\ardeg(\ovl E):=-\int_{\Omega}\ln\lVert s\rVert_{\det,\omega}\nu(d\omega)$$
where $s\in \det E\setminus\{0\}$. This definition is independent of the choice of $s$. The \textit{slope} of $\ovl E$ is defined as $\widehat\mu(\ovl E)=\ardeg(\ovl E)/\dim_K(E)$. If $E=0$, then by convention we set $\ardeg(\ovl E):=0$. 

From now on, we assume that $K$ admits a countable subfield dense in every $K_\omega$ or the $\sigma$-algebra $\mathcal A$ is discrete. In this case, for every subspace $F\subset E$, we obtain an adelic vector bundle $\ovl F$ by taking restrictions of norms. Similarly, if $E\twoheadrightarrow G$ is a surjective map, then we obtain an adelic vector bundle $\ovl G$ by taking quotient norms. The \textit{positive degree}, \textit{maximal slope} and \textit{minimal slope} are defined as 
$$\begin{cases}
\ardeg_+(\ovl E)=\sup\limits_{F\subset E}\ardeg(\ovl F),\\
\mumax(\ovl E)=\sup\limits_{0\not= F\subset E}\widehat\mu(\ovl F),\\
\mumin(\ovl E)=\inf\limits_{E\twoheadrightarrow G\not=0}\widehat\mu(\ovl G).
\end{cases}$$
Also we set by convention that $\mumax(0)=-\infty$ and $\mumin(0)=+\infty$.
\begin{prop}\label{prop_deg_posdeg}
Let $\ovl E=(E,\{\nm_{\omega}\})$ be an adelic vector bundle. We give the following properties:
\begin{enumerate}
    \item[\textnormal{(a)}] Let $f:\Omega\rightarrow\R$ be a $\nu$-integrable function. Then $\ovl {E(f)}:=(E,\{\nm_\omega \exp{(-f(\omega)})\})$ is also an adelic vector bundle and \[\begin{cases}
    \displaystyle\ardeg(\ovl {E(f)})=\ardeg(\ovl E)+(\dim_K E)\int_{\Omega} f\nu(d\omega),\\
    \displaystyle\mumax(\ovl {E(f)})=\mumax(\ovl E)+\int_{\Omega} f\nu(d\omega),\\
    \displaystyle\mumin(\ovl {E(f)})=\mumin(\ovl E)+\int_{\Omega} f\nu(d\omega).
    \end{cases}\]
    \item[\textnormal{(b)}] If $\mumin(\ovl E)\geq 0$, then $\ardeg(\ovl E)=\ardeg_+(\ovl E)$.
\end{enumerate}
\end{prop}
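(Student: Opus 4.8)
The plan is to handle part (a) by a direct computation recording how the multiplicative rescaling of the norms propagates to the determinant and quotient norms, and part (b) by combining the superadditivity of the Arakelov degree in short exact sequences with the hypothesis $\mumin(\ovl E)\geq 0$.

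For part (a), I would first check that $\ovl{E(f)}$ is again an adelic vector bundle: multiplying each $\nm_\omega$ by the positive scalar $\exp(-f(\omega))$ preserves the norm axioms, and since $f$ is $\nu$-integrable the measurability and dominancy conditions of \cite[4.1]{adelic} are inherited from those of $\xi$. The key observation is that rescaling each $\nm_\omega$ by $\exp(-f(\omega))$ multiplies the determinant norm $\lVert\cdot\rVert_{\det,\omega}$ by $\exp(-(\dim_K E)f(\omega))$, because $\lVert\cdot\rVert_{\det,\omega}$ is an infimum of products of $r=\dim_K E$ factors, each individually scaled by $\exp(-f(\omega))$. Taking logarithms and integrating against $\nu$ then gives the degree formula straight from the definition of $\ardeg$. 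For the slope I would note that for every quotient $E\twoheadrightarrow G$ the quotient norm attached to $\ovl{E(f)}$ is the quotient norm of $\ovl E$ scaled by $\exp(-f(\omega))$, since the infimum defining the quotient norm commutes with multiplication by a positive scalar; thus this quotient is exactly $\ovl G(f)$. Applying the degree formula to $\ovl G$ and dividing by $\dim_K G$ yields $\widehat\mu(\ovl G(f))=\widehat\mu(\ovl G)+\int_\Omega f\,\nu(d\omega)$, and as the additive constant is independent of $G$ it passes outside the infimum over nonzero quotients, giving the slope formula.

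For part (b), the inequality $\ardeg(\ovl E)\leq\ardeg_+(\ovl E)$ is immediate by taking $F=E$ in the supremum defining $\ardeg_+$. For the reverse inequality I would fix a subspace $F\subset E$, set $G=E/F$, and invoke the superadditivity $\ardeg(\ovl E)\geq\ardeg(\ovl F)+\ardeg(\ovl G)$ for the exact sequence $0\to\ovl F\to\ovl E\to\ovl G\to 0$. This rests on the submultiplicativity of the determinant norm, $\lVert\lambda_F\wedge\lambda_G\rVert_{\det,\omega}\leq\lVert\lambda_F\rVert_{\det,\omega}\,\lVert\lambda_G\rVert_{\det,\omega}$ under the canonical identification $\det E\cong\det F\otimes\det G$, which one proves by choosing near-optimal decompositions of $\lambda_F$ and of $\lambda_G$, lifting the factors of the latter to $E$ with norms close to their quotient norms, and comparing with the infimum defining the determinant norm on $E$. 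Since $\mumin(\ovl E)\geq 0$, every nonzero quotient, in particular $\ovl G$, satisfies $\widehat\mu(\ovl G)\geq 0$, hence $\ardeg(\ovl G)\geq 0$; therefore $\ardeg(\ovl F)\leq\ardeg(\ovl E)-\ardeg(\ovl G)\leq\ardeg(\ovl E)$. Taking the supremum over all $F$ gives $\ardeg_+(\ovl E)\leq\ardeg(\ovl E)$, and combined with the easy direction this yields equality. The degenerate cases are direct: $F=E$ is an equality, and $F=0$ uses $\ardeg(\ovl E)\geq 0$, itself a consequence of $\widehat\mu(\ovl E)\geq\mumin(\ovl E)\geq 0$.

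The main obstacle is the superadditivity step in part (b): one must establish the submultiplicativity of the determinant norm at each place $\omega$, confirm that it survives integration against $\nu$, and take care that the direction of the inequality is correct so that it combines with $\ardeg(\ovl G)\geq 0$ to control $\ardeg(\ovl F)$ from above. By contrast, part (a) is essentially bookkeeping once the scaling behavior of the determinant and quotient norms has been recorded.
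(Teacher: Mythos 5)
Your proof is correct and takes essentially the same route as the paper: part (a) is the same direct computation from the definitions, and part (b) is just the contrapositive form of the paper's one-line contradiction argument, both resting on the superadditivity $\ardeg(\ovl E)\geq\ardeg(\ovl F)+\ardeg(\ovl{E/F})$, which the paper cites as \cite[Proposition 4.3.12]{adelic} and you reprove via submultiplicativity of the determinant norm. Your extra care with the degenerate subspaces $F=0$ and $F=E$ is a sound (if implicit in the paper) addition.
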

\begin{proof}
(a) This is due to the definition.
(b) Assume that there exists a subspace $F\subset E$ such that $\ardeg(\ovl F)>\ardeg(\ovl E)$, then $\widehat\mu(\ovl{E/F})<0$ due to \citep[Proposition 4.3.13]{adelic}, hence a contradiction.
\end{proof}
\subsection{Adelic line bundles}
Let $\pi:X\rightarrow \mathrm{Spec}K$ be a geometrically reduced projective $K$-scheme. 
Let $L$ be a line bundle over $X$. For each $\omega\in\Omega$, we denote by $X_\omega^{\mathrm{an}}$ the Berkovich analytification\citep{Berkovich} of $X_\omega:=X\times_{\mathrm{Spec}K}\mathrm{Spec}K_\omega$ with respect to $\lvert\cdot\rvert_\omega$ and $L_\omega^{\mathrm{an}}$ the analytification of $L_\omega:=L\ot_K K_\omega$. Note that as a set, $\xanomega$ consists of pairs $x=(P,\lvert\cdot\rvert_x)$ where $P$ is a scheme point in $X_\omega$ and $\lvert\cdot\rvert_x$ is an absolute value on the residue field $\kappa(P)$ extending $\lvert\cdot\rvert_\omega$. We denote by $\widehat{\kappa}(x)$ the completion of $\kappa(P)$ with respect to $\lvert\cdot\rvert_x$.
We equip each $L_\omega^{\mathrm{an}}$ with a \textit{continuous metric} $\phi_\omega$, that is, a collection $\{\lvert\cdot\rvert_{\phi_\omega}(x)\}_{x\in \xanomega}$ where each $\lvert\cdot\rvert_{\phi_\omega}(x)$ is a norm on $L\ot_{\kappa(P)}\widehat{\kappa}(x)$, such that for any regular section $s\in \Gamma(L,U)$, the function
$$(x\in U^{\mathrm{an}}_\omega)\mapsto \lvert s\rvert_{\phi_\omega}(x)$$ is continuous. If the metric family $\phi:=\{\phi_\omega\}_{\omega\in\Omega}$ satisfies certain dominancy and measurability conditions in \citep[6.1]{adelic}, then we say the pair $\ovl L=(L,\phi)$ is an \textit{adelic line bundle} over $X$. 
Let $\psi=\{\psi_\omega\}$ be another continuous metric family on $L$.
For each $\omega$, $\psi_\omega-\phi_\omega$ is a continuous metric on $\O_{X_\omega}^{\mathrm{an}}$, which corresponds to a continuous function $$(x\in \xanomega)\mapsto -\ln \lvert 1\rvert_{\psi_\omega-\phi_\omega}(x).$$
By abuse of notations, we also denote the function by $\psi_\omega-\phi_\omega$. 
We define the \textit{distance} $d(\psi_\omega,\phi_\omega):=\sup\limits_{x\in \xanomega}\lvert \psi_\omega-\phi_\omega(x)\rvert$. If $(L,\psi)$ is also an adelic line bundle, then the function $\omega\mapsto d(\psi_\omega,\phi_\omega)$ is $\nu$-integrable by definition. We denote by $d(\psi,\phi)$ the integral.

We denote by $\xi_{\phi}=\{\nm_{\phi_\omega}\}$ the norm family consisting of supnorms $\nm_{\phi_\omega}$ induced by $\phi_\omega$. Then the pair $(H^0(X,L),\xi_\phi)$ is an adelic vector bundle, which we denote by $\pi_*(\ovl L)$. The \textit{$\chi$-volume} of $\ovl L$ is defined as
$$\vol_\chi(\ovl L):=\limsup_{n\rightarrow+\infty} \frac{\ardeg(\pi_*(n\ovl L))}{n^{d+1}/(d+1)!}$$
where $d$ is the dimension of $X$.
\begin{prop}
The \textit{asymptotic maximal slope} and \textit{asymptotic minimal slope} of $\ovl L$ are defined as $$\begin{cases}
    \displaystyle\mumax^{\mathrm{asy}}(\ovl L):=\limsup_{n\rightarrow +\infty}\frac{\mumax(\pi_*(n\ovl L))}{n},\\
    \displaystyle\mumin^{\mathrm{asy}}(\ovl L):=\liminf_{n\rightarrow +\infty}\frac{\mumin(\pi_*(n\ovl L))}{n}.
\end{cases}$$
If $L$ is big and $\mumin^{\mathrm{asy}}(\ovl L)\in\R$, then the sequence $\displaystyle\left\{\frac{\ardeg(\pi_*(n\ovl L))}{n^{d+1}/(d+1)!}\right\}$ converges to $\vol_\chi(\ovl L)$.
\end{prop}
\begin{proof}
This can be proved by using \citep[Theorem 6.4.6]{adelic} and Proposition \ref{prop_deg_posdeg}. 
\end{proof}
Now let $f$ be a $\nu$-integrable function. Then we have an adelic line bundle $(\O_X,f)$ if we consider the constant function $(x\in \xanomega)\mapsto f(\omega)$ for each $\omega\in\Omega$. We denote by $\ovl L(f)$ the adelic line bundle $(L,\phi+f)$, where each $(\phi+f)_\omega$ denotes the continuous metric $\{e^{-f(\omega)}\lvert\cdot\vert_{\phi_\omega}\}_{x\in\xanomega}$. Then by Proposition \ref{prop_deg_posdeg}, we can easily see that 
\[
\begin{cases}
\displaystyle \vol_\chi(\ovl L(f))=\vol_\chi(\ovl L)+(d+1)\mathrm{vol}(L)\int_\Omega f\nu(d\omega),\\
\displaystyle\mumax^{\mathrm{asy}}(\ovl L(f))=\mumax^{\mathrm{asy}}(\ovl L)+\int_\Omega f\nu(d\omega),\\
\displaystyle\mumin^{\mathrm{asy}}(\ovl L(f))=\mumin^{\mathrm{asy}}(\ovl L)+\int_\Omega f\nu(d\omega).
\end{cases}
\]
\subsection{Adelic Cartier divisors}
This subsection is a preliminary for subsection \ref{conti_vol_fun} about the continuity of $\vol_\chi(\cdot)$. If you are not interested, you may skip this part.
Now we further assume that $X$ is normal and geometrically integral. Let $D$ be a Cartier divisor on $X$. For each $\omega\in\Omega$, we denote by $D_\omega$ the pull-back of $D$ through $X_\omega\rightarrow X$. Then there is a bijection 
$$\big\{\text{Green functions }g_\omega\text{ on }D_\omega\big\}\rightarrow \big\{\text{continuous metrics }\phi_{g_\omega}\text{ on } \O_{X_\omega}(D_\omega)\big\}$$
(for the definition of Green functions, see \citep[2.5]{adelic}). Let $g=\{g_\omega\}$ be a Green function family on $D$, that is, each $g_\omega$ is a Green function on $D_\omega$. We call such a pair $\ovl D=(D,g)$ an \textit{adelic Cartier divisor} if the corresponding pair $(\O_X(D),\phi_g:=\{\phi_{g_\omega}\})$ is an adelic line bundle. We denote by $\widehat{\Div}(X)$ the set of adelic Cartier divisors. Let $\mathbb K=\Q$ or $\R$. 
The set $\widehat{\Div}_{\mathbb K}(X)$ of adelic $\mathbb{K}$-Cartier divisors are defined as $\widehat{\Div}(X)\ot_\Z \mathbb K/N_\mathbb{K}$ where the $N_\mathbb{K}$ is the subspace spanned by elements of the form $$(0,\lambda_1 f_1+\cdots+\lambda_r f_r)\ot 1-\sum_{i=1}^{r}(0,f_i)\ot \lambda_i$$
where $\lambda_i\in \mathbb{K}$, and $f_i$ are continuous function families such that $(0,f_i)\in\widehat{\Div}(X)$.
For any $\ovl D=(D,g)\in \widehat{\Div}_{\mathbb K}(X)$, we denote by $H^0_{\mathbb K}(X,D)$ the set $$\{f\in K(X)\mid \mathrm{div}(f)+D\geq_{\mathbb K} 0\}.$$ Then we can assign a norm family $\xi_g$ on $H^0_{\mathbb K}(X,D)$ such that $(H^0_{\mathbb K}(X,D),\xi_g)$ is an adelic vector bundle \citep[Theorem 6.2.18]{adelic}.
The \textit{volume} $\vol(\cdot)$ and \textit{$\chi$-volume} $\vol_\chi(\cdot)$ on $\widehat{\Div}_\R(X)$ are given by
$$\begin{cases}
\displaystyle\vol(\ovl D):=\limsup\limits_{n\rightarrow +\infty}\frac{\ardeg_+(H^0_\R(nD),\xi_{ng})}{n^{d+1}/(d+1)!},\\
\displaystyle\vol_\chi(\ovl D):=\limsup\limits_{n\rightarrow +\infty}\frac{\ardeg(H^0_\R(nD),\xi_{ng})}{n^{d+1}/(d+1)!}.
\end{cases}$$
For reader's convenience, here we list some properties:
\begin{enumerate}
\item[(A)] If $D$ is big, then $\vol(\ovl D)$ is actually a limit \citep[Theorem 6.4.6]{adelic}.
\item[(B)] $\vol(\cdot)$ is continuous on $\widehat{\Div}_\R(X)$ in the sense of \citep[Theorem 6.4.24]{adelic}.
\item[(C)] If $D$ is big and the \textit{asymptotic minimal slope} $$\mumin^{\mathrm{asy}}(\ovl D):=\liminf_{n\rightarrow +\infty}\frac{\mumin(H^0_\R(nD),\xi_{ng})}{n}$$ is finite, then $\vol_{\chi}(\ovl D)$ is a limit.
\item[(D)] If the \textit{upper asymptotic minimal slope} $$\mumin^{\sup}(\ovl D):=\limsup_{n\rightarrow +\infty}\frac{\mumin(H^0_\R(nD),\xi_{ng})}{n}$$
is positive, then $\vol(\ovl D)=\vol_\chi(\ovl D)$.
\end{enumerate}
Note that (C) and (D) can be derived from definitions and Proposition \ref{prop_deg_posdeg}.

\section{Equidistribution theorem over adelic curves}
Let $\pi:X\rightarrow \mathrm{Spec}K$ be a geometrically reduced and projective $K$-scheme of dimension $d$.
\subsection{Differentiability and concavity of $\chi$-volume} A similar discussion of this subsection can be also found in \citep[Chapter 7]{huayimoriwaki_equi}.
Let $\ovl L=(L,\phi)$ be an adelic line bundle such that $L$ is big. We assume that $\mumin^{\mathrm{asy}}(\ovl L)>-\infty$, in which case $$\vol_\chi(\ovl L)=\limnto \frac{\ardeg(\pi_*(n\ovl L))}{n^{d+1}/(d+1)!}.$$
Note that if $\mumin^{\mathrm{asy}}(L,\phi)>-\infty$ for some continuous metric family $\phi$, then for any continuous metric family $\psi$ of $L$ such that $\ovl L'=(L,\psi)$ is an adelic line bundle, we have $\mumin^{\mathrm{asy}}(\ovl L')>-\infty$ as well.
We can see that \begin{align*}
    \vol_\chi(\ovl L)-\vol_\chi(\ovl L')&=\limnto \frac{\ardeg(\pi_*(\ovl nL))-\ardeg(\pi_*(\ovl L'^{\ot n}))}{n^{d+1}/(d+1)!}\\
    &=\limnto \frac{\displaystyle\int_\Omega \mathrm{vol}(\nm_{n\phi_\omega},\nm_{n\psi_\omega})\nu(d\omega)}{n^{d+1}/(d+1)!}.
\end{align*}
Here the relative volume $\mathrm{vol}(\nm,\nm')$ is defined in the section of notation and conventions.
Given the following facts that
\begin{enumerate}
    \item[\textnormal{(i)}] $\big\lvert\mathrm{vol}(\nm_{n\phi_\omega},\nm_{n\psi_\omega}))\big\lvert\leq d(n\phi_\omega, n\psi_\omega) h^0(nL)= d(\phi_\omega, \psi_\omega)n h^0(nL)$ by the definition of distance function.
    \item[\textnormal{(ii)}] The function $(\omega\in \Omega) \mapsto d(\phi_\omega,\psi_\omega)$ is integrable due to the definition of adelic line bundles.
    \item[\textnormal{(iii)}] The limit $\displaystyle{\mathrm{vol}(L_\omega,\phi_\omega,\psi_\omega):=\limnto \frac{\mathrm{vol}(\nm_{n\phi_\omega},\nm_{n\psi_\omega})}{n^{d+1}/d!}}$ exists due to \citep[Theorem 4.5]{Chen_Distribution}.
\end{enumerate}
The Lebesgue dominated convergence theorem shows that
$$\vol_\chi(\ovl L)-\vol_\chi(\ovl L')=(d+1)\int_\Omega \mathrm{vol}(L_\omega,\phi_\omega,\psi_\omega)\nu(d\omega).$$
In particular, $\displaystyle\lvert\vol_\chi(\ovl L)-\vol_\chi(\ovl L')\rvert\leq(d+1)\int_\Omega d(\phi_\omega,\psi_\omega)\nu(d\omega),$ which gives the following:
\begin{prop}\label{prop_vol_chi_func}
    Let $C_{\Omega}(X,L)$ be the set of continuous metric families $\phi$ such that $(L,\phi)$ is an adelic line bundle. We define the pseudometric function as
    $\displaystyle d(\phi,\psi):=\int_\Omega d(\phi_\omega,\psi_\omega)\nu(d\omega)$ for $\phi,\psi\in C_{\Omega}(X,L)$. Then $\vol_\chi(\cdot)$ is a continuous function on $C_{\Omega}(X,L)$.
\end{prop}

Now we assume that $\phi$ is semipositive, that is, each $\phi_\omega$ is semipositve \citep[2.3]{adelic}. For each $\omega\in\Omega$, we denote by $(dd^c \phi_\omega)^d$ the measure on $\xanomega$ induced by $\phi_\omega$\citep{Chambert2012forms}. Then we have the following proposition:
\begin{prop}[Differentiability of $\vol_\chi(\cdot)$]\label{diff_vol_chi} Let $\ovl L=(L,\phi)$ be an adelic line bundle such that 
\begin{enumerate}
    \item[\textnormal{(i)}] $L$ is big and $\phi$ is semipositive.
    \item[\textnormal{(ii)}] $\mumin(\ovl L)>-\infty$.
\end{enumerate}
Let $f=\{f_\omega\}_{\omega\in \Omega}$ be a family of continuous functions such that $(\O_X,f)$ is an adelic line bundle. Then
\begin{equation}\label{derivative_vol_S}
    \frac{d}{dt}\bigg|_{t=0}\vol_\chi(L,\phi+tf)=(d+1)\int_{\Omega}\left(\int_{\xanomega} f_\omega(dd^c\phi_\omega)^d\right)\nu(d\omega).
\end{equation}
\end{prop}
\begin{proof}
For each $\omega\in\Omega$, we have
\begin{equation}\label{deri_vol_local}\frac{d}{dt}\bigg|_{t=0} \mathrm{vol}(L_\omega,\phi_\omega+tf_\omega,\phi_\omega)=\int_{\xanomega} f_\omega (dd^c \phi_\omega)^d
\end{equation}
due to \citep[Theorem 1.2]{Boucksom_2021} and \citep[Theorem B]{Boucksom_Growth}.
Notice that $$\frac{1}{\lvert t\rvert}\lvert\mathrm{vol}(L_\omega,\phi_\omega+t f_\omega,\phi_\omega)\rvert\leq \sup_{x\in \xanomega} \lvert f_\omega(x)\rvert.$$
Again by dominated convergence theorem, \eqref{derivative_vol_S} can be obtained by taking an integral of \eqref{deri_vol_local} over $\Omega$.
\end{proof}

In the following, we prove the concavity of $\vol_\chi(\cdot)$.
Take a valuation $v:K(X)\rightarrow \Z^d\cup\{+\infty\}$ of rank $d$, that is, $v(xy)=v(x)+v(y)$ and $v(x+y)\geq \min(v(x),v(y))$ for $x,y\in K(X)$, where $\Z^d$ is equipped with the lexicographic order. Such a valuation can be induced by a flag of subvarieties as in \cite{Lazarsfeld2009} or by a regular rational point as in \cite[6.4.3]{adelic}. By \citep[6.3]{adelic}, we can construct a concave function
$$G_{\ovl L}:\Delta(L)\rightarrow \R$$
where $\Delta(L)\subset\R^{d}$ is the Okounkov body associated to the linear series of $L$. We have the following properties:
\begin{enumerate}
    \item[(a)] $G_{n\ovl L}(nx)=nG_{\ovl L}(x)$ for any $x\in\Delta(L)$.
    \item[(b)] If $\mumin^{\mathrm{asy}}(\ovl L)>-\infty$, then 
    $\displaystyle\vol_\chi(\ovl L)=(d+1)\int_{\Delta(L)}G_{\ovl L}(x)dx.$
    \item[(c)] Let $\ovl L_1$ and $\ovl L_2$ be adelic line bundles whose underlying line bundles $L_1$ and $L_2$ are big. Then for any $x\in\Delta(L_1)$ and $y\in \Delta(L_2)$, it holds that
    $$G_{\ovl L_1}(x)+G_{\ovl L_2}(y)\leq G_{\ovl L_1+\ovl L_2}(x+y).$$
\end{enumerate}
Then we give the following:
\begin{prop}[Concavity of $\vol_\chi(\cdot)$]\label{prop_concave_vol_chi}
Let $\ovl L=(L,\phi)$ be an adelic line bundle such that $L$ is big and $\mumin(\ovl L)>-\infty$. Let $f=\{f_\omega\}_{\omega\in \Omega}$ be a family of continuous functions such that $(\O_X,f)$ is an adelic line bundle. Then
$$(t\in\R)\mapsto\frac{\vol_\chi(L,\phi+t f)}{(d+1)\mathrm{vol}(L)}$$
is a concave function.
\end{prop}
\begin{proof}
For any $t_1,t_2\in \R$, let $\psi_1=\phi+t_1 f$ and $\psi_2=\phi+t_2f$. Let $\ovl L_1=(L,\psi_1)$ and $\ovl L_2=(L,\psi_2)$.
Now let $\alpha=p/q\in(0,1)$ be a rational number, where $p,q>0$. Notice that $$\Delta((q-p)L)\times \Delta(pL)\rightarrow \Delta(qL), (x,y)\mapsto x+y$$ is surjective, we have 
\begin{align*}\allowdisplaybreaks\displaystyle
    q\frac{\displaystyle\int_{\Delta(L)}G_{(L,(1-\alpha)\psi_1+\alpha\psi_2)}(x)dx}{\mathrm{vol}(L)}&=
    \frac{\displaystyle\int_{\Delta(qL)}G_{(q-p)\ovl L_1+p\ovl L_2}(x)dx}{\mathrm{vol}(qL)}\\
    &\geq
    \frac{\displaystyle\int_{\Delta((q-p)L)}G_{(q-p)\ovl L_1}(x)dx}{\mathrm{vol}((q-p)L)}+\frac{\displaystyle\int_{\Delta(pL)}G_{p\ovl L_2}(x)dx}{\mathrm{vol}(pL)}\\
    &=(q-p)\frac{\displaystyle\int_{\Delta(L)}G_{\ovl L_1}(x)dx}{\mathrm{vol}(L)}+p\frac{\displaystyle\int_{\Delta(L)}G_{\ovl L_2}(x)dx}{\mathrm{vol}(L)}
\end{align*}
where the equalities are due to property (a), and the inequality is due to (c). Hence 
\begin{equation}\label{eq_concavity}
\frac{\vol_\chi(L,(1-\alpha)\psi_1+\alpha\psi_2)}{(d+1)\mathrm{vol}(L)}\geq (1-\alpha)\frac{\vol_\chi(L,\psi_1)}{(d+1)\mathrm{vol}(L)}+\alpha\frac{\vol_\chi(L,\psi_2)}{(d+1)\mathrm{vol}(L)}
\end{equation}
holds for any rational number $\alpha\in (0,1)\cap \Q$.
Moreover, the continuity of $\displaystyle{\frac{\vol_\chi(L,\phi+tf)}{(d+1)\mathrm{vol}(L)}}$ due to Proposition \ref{prop_vol_chi_func} guarantees that \eqref{eq_concavity} holds for any real number $\alpha\in(0,1).$
\end{proof}
\begin{rema}
If the $\sigma$-algebra $\mathcal A$ is discrete, then for any $\omega\in\Omega$, we consider the case that $f=f\cdot \mathds{1}_{\{\omega\}}$, that is $f_{\omega'}=0$ for any $\omega'\not=\omega$, then the concavity of the function $t\mapsto \displaystyle\frac{\vol_\chi(L,\phi+tf)}{(d+1)\mathrm{vol}(L)}$ can be proved by using the local Hodge index theorem given in \citep{Boucksom_hodge} and \citep{Yuan_hodge}.
\end{rema}
\subsection{Heights and measures}\label{sub_height}
Let $K'$ be an algebraic extension of $K$, we have a canonical construction $S_{K'}=(K',(\Omega_{K'},\mathcal A_{K'},\nu_{K'}),\varphi_{K'})$ of adelic curve on $K'$ extending $S$. Here we provide a short reminder in the case that $K'/K$ is finite and separable which is enough for the setting of this article. Let $$\Omega_{K'}:=\{\lvert\cdot\rvert_{\omega'}\text{ is an absolute value on }K’\text{, extending } \lvert\cdot\rvert_\omega\text{ for some }\omega\in\Omega\}.$$ By abuse of notation, we may do not distinguish $\omega'$ and $\lvert\cdot\rvert_{\omega'}$. Then we have a canonical map $\pi_{K'/K}:\Omega_{K'}\rightarrow \Omega$ such that $\lvert\cdot\rvert_{\omega'}$ extends $\lvert\cdot\rvert_{\pi_{K'/K}(\omega')}$. The $\sigma$-algebra $\mathcal A_{K'}$ is defined to be the smallest $\sigma$-algebra such $\pi_{K'/K}$ and functions of form $(\omega'\in\Omega_{K'})\mapsto\lvert \alpha\rvert_{\omega'}$ are measurable, where $\alpha$ runs over $K'$. The measure $\nu_{K'}$ is given as 
$$\nu_{K'}(A):=\int_{\Omega}\big(\sum_{\substack{\omega'\in A,\\\pi_{K'/K}(\omega')=\omega}}\frac{[K'_{\omega'}:K_\omega]}{[K':K]}\big)\nu_{K}(d\omega)$$
where $A\in \mathcal A_{K'}$ and $K'_{\omega'}$ is the completion of $K'$ with respect to $\lvert\cdot\rvert_{\omega'}$. This measure is well-defined due to \cite[Theorem 3.3.4]{adelic}. We refer the reader to \cite[3.4]{adelic} for general cases.

For any $x\in X(\ovl K)$ defined over $K'$, that is, a morphism $x:\mathrm{Spec}K'\rightarrow X$, by abuse of notation, we may also use $x$ to denote the image of the morphism. Let $\ovl L=(L,\phi)$ be an adelic line bundle over $X$. Then the pull-back $(x^*L,x^*\phi)$ induces an adelic line bundle over $S_{K'}$. We define the \textit{height function} $h_{\ovl L}:X(\ovl K)\rightarrow \R$ as 
$$h_{\ovl L}(x):=\ardeg(x^*L,x^*\phi).$$
We can give a more explicit definition based on the construction of Berkovich spaces. For any $\omega'\in\Omega_{K'}$ with $\pi_{K'/K}(\omega')=\omega$, consider the Berkovich space $x_\omega^{\mathrm{an}}\subset \xanomega$ which is $$\{\lvert\cdot\rvert\text{ is an absolute value on the residue field of }x\text{ which extends }\lvert\cdot\rvert_\omega\}$$ as a set. Then the restriction of $\lvert\cdot\rvert_{\omega'}$ on the residue field of $x$ gives a point $p_{\omega'}\in x_\omega^{\mathrm{an}}.$

Let $s$ be a rational section of $L$ not vanishing at $x$. Then the height function can be also given by
$$h_{\ovl L}(x):=-\int_{\omega'\in\Omega_{K'}}\ln\lvert s\rvert_{\phi_{\pi_{K'/K}(\omega')}}(p_{\omega'})\nu_{K'}(d\omega').$$ We can see the following properties:
\begin{enumerate}
    \item[(a)] The definition of height function is independent of the choice of $K'$ and the section $s$.
    \item[(b)] Let $\ovl L'$ be another adelic line bundle. Then we have $h_{\ovl L+\ovl L'}(x)=h_{\ovl L}(x)+h_{\ovl L'}(x)$.
\end{enumerate}
As in \cite[6.2.2]{adelic}, we define the \textit{essential minimum} as
$$\zeta_{\mathrm{ess}}(\ovl L):=\sup_{\substack{Y\subsetneq X\\\text{closed}}}\inf_{x\in X\setminus Y(\ovl K)}h_{\ovl L}(x).$$
Now we assume that $\phi$ is semipositve.
Let $C_\Omega(X)$ be the set of dominated and measurable metric families $f=\{f_\omega\}$ on $\O_X$. We define linear functionals $\mu_{x}$ and $\mu_{\ovl L}$ on $C_\Omega(X)$ as $$\begin{cases}\mu_{x}(f)=h_{(\O_X,f)}(x),\\
\mu_{\ovl L}(f)=\displaystyle{\frac{\displaystyle{\int_{\Omega}\int_{\xanomega}}f_\omega(dd^c\phi_\omega)^d\nu(d\omega)}{\mathrm{vol}(L)}.}
\end{cases}$$

\begin{defi}
    Let $I$ be an infinite directed set. We say $\{x_\iota\in X(\ovl K)\}_{\iota\in I}$ is a \textit{generic net} of algebraic points on $X$, if for any proper closed subset $Y\subset X$, there exits $\iota_0$ such that for any $\iota>\iota_0$, $x_\iota\not\in Y$. 
\end{defi}
\begin{lemm}
    If $\{x_\iota\in X(\ovl K)\}_{\iota\in I}$ is a generic net of algebraic points on $X$, then 
    $$\liminf_{\iota\in I}h_{\ovl L}(x_\iota)\geq \zeta_{\mathrm{ess}}(\ovl L)\geq \mumax^{\mathrm{asy}}(\ovl L)\geq \frac{\vol_\chi(L,\phi)}{(d+1)\mathrm{vol}(L)}$$
\end{lemm}
\begin{proof}
    The second inequality is due to \cite[Proposition 6.4.4]{adelic}, and the third is due to the definitions. Therefore it suffices to show the first inequality. For any proper closed subset $Y\subset X$, we have $$\liminf_{\iota\in I}h_{\ovl L}(x_\iota)\geq\inf_{x\in X\setminus Y(\ovl K)}h_{\ovl L}(x)$$ since $\{x_\iota\}$ is generic. Therefore $\liminf\limits_{\iota\in I}h_{\ovl L}(x_\iota)\geq \zeta_{\mathrm{ess}}(\ovl L)$ since $Y$ is an arbitrary proper closed subset, which concludes the proof.
\end{proof}
Then we have the following equidistribution theorem.
\begin{theo}\label{theo_equidistribution_big}
Let $\ovl L=(L,\phi)$ be an adelic line bundle such that $L$ is big, $\phi$ is semipositive and $\mumin^{\mathrm{asy}}(\ovl L)> -\infty$.
Let $\{x_\iota\in X(\ovl K)\}_{\iota\in I}$ be a generic net of algebraic points on $X$ such that 
$$\lim_{\iota\in I} h_{\ovl L}(x_\iota)= \frac{\vol_\chi(L,\phi)}{(d+1)\mathrm{vol}(L)}.$$
Then for any $f\in C_\Omega(X)$, we have $\{\mu_{x_\iota}(f)\}_{\iota\in I}$ converges to $\mu_{\ovl L}(f).$
\end{theo}
\begin{proof}
Let $f$ be a dominated and measurable family on $\O_X$. We 
set $h_\iota(t)=h_{(L,\phi+tf)}(x_\iota)$ and $g(t)=\displaystyle{\frac{\vol_\chi(L,\phi+tf)}{(d+1)\mathrm{vol}(L)}}.$ Then $\displaystyle\frac{d}{dt}\bigg|_{t=0}h_\iota(t)=\mu_{x_\iota}(f)$ and $\displaystyle\frac{d}{dt}\bigg|_{t=0}g(t)=\mu_{\ovl L}(f).$
We thus conclude the proof by using \citep[Lemma 6.6]{Boucksom_Growth}.
\end{proof}
\begin{rema}
According to \citep[Proposition 2.8]{BPRM2015Dis},
the existence of such a generic net of algebraic points satisfying the condition in Theorem \ref{theo_equidistribution_big} is equivalent to say that
$$\zeta_{\mathrm{ess}}(\ovl L)=\frac{\vol_\chi(L,\phi)}{(d+1)\mathrm{vol}(L)}.$$
\end{rema}
\section{Boundedness of minimal slopes}\label{sect_bound}
In this section, we assume that $K$ is of characteristic $0$. We give an adelic version of Ikoma's proof\citep{ikoma_boundedness} for slope boundedness. 
\subsection{Normed graded linear series of bounded type}
Let $X$ be a projective and normal $K$-variety of dimension $d$. Let $L$ be a line bundle on $X$. For $n\in\N$, let $E_n:=H^0(X,nL)$. We equip each $E_n$ with a dominated and measurable norm family $\xi^{(n)}=\{\lVert\cdot\rVert^{(n)}_\omega\}_{\omega\in\Omega}$.
We say $\{\ovl E_n:=(E_n,\xi^{(n)})\}$ is of \textit{bounded type} if for any $s\in E_n\setminus\{0\}$, there exists functions  $(\omega\in\Omega)\mapsto\tau_\omega(s),\sigma_\omega(s)\in\R_{>0}$ whose logarithms are integrable, such that for any $m\in\N$ and $t\in E_m\setminus\{0\}$, we have
    \begin{equation}\label{eq_bounded_type}
        \tau_\omega(s)^{k+m}\lVert t\rVert^{(m)}_\omega\leq \lVert s^k\cdot t\rVert^{(kn+m)}_\omega \leq \sigma_\omega(s)^{k+m}\lVert t\rVert^{(m)}_\omega.
    \end{equation}

We set $\tau(s)=-\displaystyle\int_\Omega\ln \tau_\omega(s)\nu(d\omega)$ and $\sigma(s)=-\displaystyle\int_\Omega \ln \sigma_\omega(s)\nu(d\omega).$
We claim the following theorem:
\begin{theo}\label{theo_bounded_slope}
If $\{\ovl E_n\}$ is of bounded type, then $$\liminf_{n\rightarrow+\infty}\frac{\mumin(\ovl E_n)}{n}>-\infty.$$
\end{theo}
In the proof, we may need to assume that $X$ is smooth with an auxiliary line bundle $A$ which can be guaranteed by the following:
\begin{lemm}
It suffices to prove Theorem \ref{theo_bounded_slope} in the case that $X$ is smooth, and that there exists a line bundle $A$ such that
\begin{enumerate}
    \item[\textnormal{(i)}] $A$ is big and globally generated line bundle over $X$.
    \item[\textnormal{(ii)}] If $L$ is big, then there exists an integer $a$ such that $aL-A\geq 0$ is not big.
\end{enumerate}
\end{lemm}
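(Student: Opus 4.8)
The plan is to read this purely as a reduction statement: starting from a general $(X,L)$ as in Theorem \ref{theo_bounded_slope}, with $X$ normal and projective and $\{\ovl E_n\}$ of bounded type, I would produce a smooth model carrying a line bundle $A$ with the two listed properties, \emph{on which the numbers $\mumin(\ovl E_n)$ are literally unchanged}, so that the $\liminf$ in the theorem is the same quantity on both sides and it suffices to treat the reduced situation.

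First I would reduce to the smooth case via resolution of singularities. Since $K$ has characteristic $0$, Hironaka's theorem provides a projective birational morphism $\mu:X'\to X$ with $X'$ smooth and projective. Because $X$ is normal and $\mu$ is proper and birational, $\mu_*\O_{X'}=\O_X$, so the projection formula gives $\mu_*\mu^*(nL)=nL$ and hence a canonical identification $H^0(X',\mu^*(nL))=H^0(X,nL)=E_n$ for every $n$. This identification is one of graded rings, since the multiplication $s^k\cdot t$ on the $X'$ side is computed in the common section ring $\bigoplus_n H^0(X,nL)$; in particular $h^0(X',n\mu^*L)=h^0(X,nL)$ grows like $n^d$, so $\mu^*L$ is big. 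Transporting each norm family $\xi^{(n)}$ verbatim along this identification, the inequalities \eqref{eq_bounded_type} and the $\nu$-integrability of $\ln\tau_\omega(\cdot)$ and $\ln\sigma_\omega(\cdot)$ are inherited unchanged, so $\{(H^0(X',\mu^*(nL)),\xi^{(n)})\}$ is again of bounded type and each $\mumin(\ovl E_n)$ takes the same value when computed on $X'$. Thus it is enough to prove the theorem for the smooth variety $X'$ equipped with the big line bundle $\mu^*L$.

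Next I would produce the auxiliary line bundle $A$. On the smooth projective variety $X'$ choose a very ample line bundle $A$; then $A$ is globally generated and ample, hence in particular big, which gives property (1). For property (2), since $\mu^*L$ is big and $A$ is ample, Kodaira's lemma furnishes an integer $a\geq 1$ with $H^0(X',a\mu^*L-A)\neq 0$, i.e. $a\mu^*L-A\geq 0$ is effective; this is exactly the comparison between $L$ and $A$ that feeds the subsequent multiplication argument.

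The reduction is formal once these ingredients are assembled, so I do not expect a deep obstacle; the single point that deserves care is the invariance of \emph{bounded type} and of the values $\mumin(\ovl E_n)$ under $\mu$. This rests entirely on $\mu_*\O_{X'}=\O_X$ (so that no new global sections appear on $X'$ and the full graded section ring, together with its multiplication maps, is preserved), which in turn uses only the normality of $X$. Once the section ring and the norm families are identified, every quantity entering \eqref{eq_bounded_type} and the definition of $\mumin$ is the same on $X$ and on $X'$, and the $\liminf$ transfers without loss.
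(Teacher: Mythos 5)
Your reduction to the smooth case is fine and agrees with the paper: resolve singularities, use $\mu_*\O_{X'}=\O_X$ (normality of $X$) to identify $H^0(X',n\mu^*L)$ with $H^0(X,nL)$ as graded rings, and transport the norm families, so that bounded type and the values $\mumin(\ovl E_n)$ are unchanged. The gap is in your construction of $A$. Condition (2) of the lemma asks for an integer $a$ such that $aL-A$ is effective \emph{and not big}, and you only arrange effectivity: you pick an arbitrary very ample $A$ on the resolution and invoke Kodaira's lemma to get $a\mu^*L-A\geq 0$. For such a choice the difference is typically big --- e.g.\ if $L$ is ample, then $a\mu^*L-A$ is ample for $a\gg 0$ --- so your construction does not satisfy (2) at all. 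The non-bigness is not decorative: it is exactly what makes Step 3 of Proposition \ref{prop_bounded_slope} work, where \cite[Claim 1.4.5]{ikoma_boundedness} produces, from the section $t_2\in H^0(X,aL-A)$, isomorphisms $H^0(X|Y,m'L-k'A)\to H^0(X|Y,mL-kA)$ with $(m',k')$ confined to a bounded range; that stabilization fails if $aL-A$ is big.

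The paper's construction is tied to the base locus of $L$: choose $a$ so that $\mathrm{Bs}(aL)$ is stable, let $f:X'\to X$ be a desingularization of the blow-up of $X$ along $\mathrm{Bs}(aL)$, and decompose $af^*L=A+F$ into moving and fixed parts. Then $A$ is globally generated (because the base locus has been blown up) and big, while $F=af^*L-A$ is effective but not big by \cite[Claim 1.4.7]{ikoma_boundedness}. Note also that this means you cannot first fix the resolution and then choose $A$ independently: the model $X'$ must be adapted to $|aL|$ so that its moving part becomes free. You should replace your choice of $A$ by this moving-part construction (or otherwise prove that some effective $aL-A$ is not big), since as written condition (2) is simply not established.
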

\begin{proof}
If $L$ is not big, we just take a desingularization $f:X'\rightarrow X$ and a very ample line bundle $A$ on $X'$.
If $L$ is big. Let $a$ be the minimal positive integer such that the base locus $\mathrm{Bs}(aL)$ is stable, that is, $\mathrm{Bs}(aL)=\bigcap_{n\geq 0}\mathrm{Bs}(nL)$. Let $f:X'\rightarrow X$ be a desingularization of the blow-up of $X$ at $\mathrm{Bs}(aL)$. Consider the decomposition $$a f^*L=A+F$$ where $A$ is the moving part and $F$ is the fixed part. Here we give a brief reminder on moving and fixed parts. Let $E$ be the greatest effective divisor such for any $t\in H^0(X', af^*L)$, $\mathrm{div}(t)\geq E$. In this case, the support of $E$ is contained in the exceptional divisor of $f$. We define the fixed part as $F:=\O_{X'}(E)$ and moving part as $A=af^*L-F$. Then $F$ is effective but not big due to \citep[Claim 1.4.7]{ikoma_boundedness}. By the normality of $X$ and $X'$, in both cases, we have $$H^0(X',nf^*L)\simeq H^0(X,nL\ot f_*\O_{X'})=H^0(X,nL).$$
We conclude the proof by replacing $X$ and $L$ with $X'$ and $f^*L$ respectively. 
\end{proof}

\subsection{Proof of Theorem \ref{theo_bounded_slope}}
We fix an integer $b$ such that $bA$ contains a very ample line bundle, that is, $bA-H\geq 0$ for some very ample line bundle $H$. 

If $L$ is big, then we fix non-zero sections $t_1\in H^0(X,A)$ and $t_2\in H^0(X,aL-A)$. If $L$ is not big, we may assume that $H^0(X,m'L)\not=0$ for some $m'> 0$ (otherwise Theorem \ref{theo_bounded_slope} holds trivially), we take $0\not=t_1\in H^0(X,m'L)$ and $t_2=1\in H^0(X,\O_X)$.
\begin{prop}\label{prop_flag}
For any $0\leq i\leq d$, we denote by $A_i:=2^i bA$.
There exists a flag $$Y_0\subset Y_1\subset\cdots\subset Y_d=X$$ of closed subvarieties defined inductively as $Y_i:=\mathrm{div}(s_i|_{Y_{i+1}})$ where $0\leq i<d$ and $s_i\in H^0(X,A_i)$, such that
\begin{enumerate}
    \item[\textnormal{(Y1)}] $Y_i$ is smooth.
    \item[\textnormal{(Y2)}] $Y_i$ avoids $\lvert\mathrm{div}(t_1)\rvert\cup\lvert\mathrm{div}(t_2)\rvert$.
    \item[\textnormal{(Y3)}] If $i\geq 1$, $H^0(X,kbA)\rightarrow H^0(Y_i,kbA|_{Y_i})$ is surjective for $0<k<2^{i}$.
\end{enumerate}
Here we use terminologies in Notation and conventions {\bf3}.
\end{prop}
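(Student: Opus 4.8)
The plan is to construct the flag inductively by descending dimension, starting from $Y_d = X$ and at each step intersecting with a carefully chosen divisor in the linear system $|A_i|$. The essential point is that the line bundles $A_i = 2^i bA$ grow geometrically, which is precisely what will make the surjectivity condition (Y3) survive the induction. I would set up the induction so that at the $i$-th stage I have already produced a smooth subvariety $Y_{i+1}$ of dimension $i+1$ (or a reduced one when passing to $Y_0$), and I must cut it by a section $s \in H^0(X, A_i)$ whose restriction to $Y_{i+1}$ is nonzero, defining $Y_i := \mathrm{div}(s|_{Y_{i+1}})$, while simultaneously arranging the three conditions.

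First I would secure conditions (Y1) and (Y2) using Bertini-type genericity. Since $bA$ contains a very ample line bundle and $A_i = 2^i bA$ is then very ample for each $i \geq 0$, the linear system $|A_i|$ restricted to $Y_{i+1}$ is base-point-free and very ample. In characteristic $0$, Bertini's theorem guarantees that a generic member $s \in H^0(X, A_i)$ restricts to a smooth (hence reduced) divisor $Y_i$ on the smooth variety $Y_{i+1}$, giving (Y1); at the bottom stage $Y_0$ is a finite reduced set of points. For (Y2), the conditions $Y_0 \cap (|\mathrm{div}(t_1)| \cup |\mathrm{div}(t_2)|) = \emptyset$ and $Y_i \not\subset |\mathrm{div}(t_1)| \cup |\mathrm{div}(t_2)|$ are open dense conditions on the choice of $s$: since $|\mathrm{div}(t_1)| \cup |\mathrm{div}(t_2)|$ is a proper closed subset, a generic very ample divisor avoids containing $Y_i$ inside it, and at the zero-dimensional stage one can further demand that the finitely many points of $Y_0$ miss this closed set entirely. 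These are all generic-position arguments, and intersecting finitely many dense open conditions (one per stage) still leaves a valid choice.

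The main obstacle is condition (Y3): the surjectivity of the restriction map $H^0(X, kbA) \rightarrow H^0(Y_i, kbA)$ for $0 < k < 2^i$. I would prove this by analyzing the short exact sequence obtained from the inclusion $Y_{i-1} \hookrightarrow Y_i$ (or more directly from cutting $Y_i$ out of $X$ by the chosen sections), and chasing the associated long exact cohomology sequence. The restriction $H^0(X, kbA) \rightarrow H^0(Y_i, kbA)$ surjects provided the intermediate cohomology $H^1$ of the twisted ideal sheaves vanishes. Here the geometric growth $A_i = 2^i bA$ is decisive: when we cut down by sections of $A_j = 2^j bA$ for $j \geq i$, the twist $kbA$ with $k < 2^i$ is small relative to the cutting bundles, so a Kodaira–Serre vanishing argument — or an iterated application of the exact sequence $0 \to \O(kbA - A_j) \to \O(kbA) \to \O_{Y}(kbA) \to 0$ combined with the ampleness of $A_j - (\text{something big})$ — forces the relevant $H^1$ terms to vanish. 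The care lies in bookkeeping the range $0 < k < 2^i$ against which cutting bundles have been used at each stage, ensuring the vanishing propagates correctly down the flag.

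Once the vanishing is in place, the proof assembles by induction: assuming the flag has been built down to $Y_{i+1}$ with (Y1)–(Y3) holding for indices above $i$, I choose $s \in H^0(X, A_i)$ generic enough to satisfy the Bertini conditions (Y1), (Y2), and the surjectivity (Y3), which is possible because the vanishing conditions define a nonempty open locus in $|A_i|$ intersecting the Bertini-open locus. I would remark that the smoothness hypothesis on $X$ (available by the preceding lemma) is what licenses the repeated use of Bertini in characteristic zero, and that the explicit choice $A_i = 2^i bA$ is engineered so that the surjectivity range $k < 2^i$ exactly matches what later estimates in the slope argument require. The construction terminates at $Y_0$, a reduced zero-dimensional scheme in general position, completing the flag.
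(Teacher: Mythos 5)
Your proposal follows essentially the same route as the paper's proof: downward induction, Bertini in characteristic zero for (Y1)--(Y2), and the short exact sequence $0 \to (k-2^i)bA|_{Y_{i+1}} \to kbA|_{Y_{i+1}} \to kbA|_{Y_i} \to 0$ together with an $H^1$-vanishing and the induction hypothesis for (Y3). The only caveat is that $A$ is merely big and globally generated, so $A_i$ need not be very ample (base-point-freeness of the restricted system plus the very ample sub-system of $bA$ is what Bertini actually uses) and the vanishing invoked is Kawamata--Viehweg rather than Kodaira or Serre; neither point affects the argument.
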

\begin{proof}
We construct the flag $Y_0\subset\cdots\subset Y_d$ by a backward induction on $i$. Assume that for a fixed $0\leq i\leq d-1$, we can take a sequence of sections $\{s_j\in H^0(X,A_j)\}_{i+1\leq j<d}$ such that $Y_{d}=X,Y_{d-1}=\mathrm{div}(s_{d-1})\cdots, Y_{i+1}=\mathrm{div}(s_{i+1}|_{Y_{i+2}})$ satisfies (Y1)(Y2)(Y3).
In particular, $$H^0(X,A_i)\rightarrow H^0(Y_{i+1},A_i|_{Y_{i+1}})$$ is surjective due to (Y3). Hence it suffices to find a non-zero section $s_i\in H^0(Y_{i+1},A_i|_{Y_{i+1}})$ such that $Y_{i}:=\mathrm{div}(s_i)$ satisfying (Y1)-(Y3). Since $A_i|_{Y_{i+1}}$ is free and contains a very ample line bundle for $0\leq i<d$, by the Bertini's theorem, there exists a section $s_i$ such that $Y_i$ satisfies (Y1) and (Y2). We will see that if $i\geq 1$, then (Y3) is then automatically satisfied. Consider the exact sequence
$$0\rightarrow (k-2^i)bA|_{Y_{i+1}}\rightarrow kbA|_{Y_{i+1}}\rightarrow kbA|_{Y_i}\rightarrow 0$$
for $0<k<2^i$. It suffices to show that $H^1(Y_{i+1},(k-2^i)bA|_{Y_{i+1}})=0$. Let $K_{Y_{i+1}}$ be the dualizing sheaf over $Y_{i+1}$. By Serre's duality, we have $H^1(Y_{i+1},(k-2^i)bA|_{Y_{i+1}})=H^i(Y_{i+1},K_{Y_{i+1}}+(2^i-k)bA|_{Y_{i+1}})$ which vanishes due to Kawamata-Viehweg vanishing theorem.
\end{proof}

We take an integer $c$ such that $(cA-L)|_{Y_i}$ is strictly effective for $0\leq i\leq d$, i.e. $h^0(Y_i,(cA-L)\mid_{Y_i})>0$.
\begin{prop}\label{prop_bounded_slope}
Let $s\in H^0(X,A_{i})$ be a non-zero section. 
For each $m,k\geq 0$, we denote by $\nm^{(m)}_{\omega,\cdot s^k,\bf q}$ the norm on $H^0(X|Y_i,mL-kA_{i})$ given by $\nm^{(m)}_\omega$ and
$$\begin{cases}
0\rightarrow H^0(X,mL-kA_{i})\xrightarrow{\cdot s^k} H^0(X,mL)\\
 H^0(X,mL-kA_{i})\rightarrow H^0(X|Y_i,mL-kA_{i})\rightarrow 0.
\end{cases}$$
Denote that $\xi^{(m)}_{\cdot s^k,\bf q}=\{\nm^{(m)}_{\omega,\cdot s^k,\bf q}\}$.
Then 
$$\mumin(H^0(X|Y_i,mL-kA_{i}),\xi^{(m)}_{\cdot s^k,\bf q})\geq C(m+k)+T$$
for some constants $C,T$ depending on $s, t_1, t_2, Y_0,\dots,Y_d, a,b,c, A$ and $\{\ovl E_n:=(E_n,\xi^{(n)})\}$.
\end{prop}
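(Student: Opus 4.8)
The plan is to prove the estimate by ascending induction on $i$ along the flag $Y_0\subset\cdots\subset Y_d=X$ furnished by Proposition \ref{prop_flag}, the base case being the zero-dimensional $Y_0$ and the inductive step being driven by the restriction exact sequences attached to the flag. Throughout, the constants $C$ and $T$ are allowed to depend on $i$, on $s$ (through the convergent integrals $\tau(s),\sigma(s)$), and on the fixed auxiliary data $A,b,c,t_1,t_2$, but not on $m$ or $k$. The whole point of the constructions preceding the statement (the globally generated $A$, the doubling $A_i=2^i bA$, the sections $t_1,t_2$, the integer $c$, and the surjectivity property (Y3)) is to make this induction run.

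For the base case $i=0$, the scheme $Y_0$ is reduced of dimension $0$, so $H^0(X|Y_0,mL-kA_0)$ has dimension bounded by $\deg_A(Y_0)$, a constant independent of $m,k$. On a space of bounded dimension the minimal slope differs from the determinant slope by a defect of size $O(\log\dim)$, so it suffices to bound the Arakelov degree of the determinant from below, i.e. to bound the quotient norms $\lVert\cdot\rVert_{\omega,\cdot s^k,\mathbf q}$ from above. Here is where the auxiliary data enter: multiplication by $s^k$ raises the twist by $kA_i$, and using the strict effectivity of $(cA-L)|_{Y_0}$ together with $t_1\in H^0(A)$ and $t_2\in H^0(aL-A)$ — none of which vanish on $Y_0$ by (Y2) — these $A$-twists can be traded for powers of $L$, so that $s^k v$ becomes comparable, at each place, to the product of a fixed section with a power-of-$L$ section to which the bounded-type inequality \eqref{eq_bounded_type} applies. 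Since the exponents in \eqref{eq_bounded_type} are linear in the degrees and $\ln\tau_\omega,\ln\sigma_\omega$, as well as the logarithms of $|t_1|,|t_2|$, are $\nu$-integrable, the resulting bound is linear in $m+k$.

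For the inductive step the key structural fact is the doubling $A_i=2A_{i-1}$, which makes the twists compatible along the flag. Restricting from $Y_i$ to $Y_{i-1}=\mathrm{div}(s'|_{Y_i})$ for the cutting section $s'\in H^0(X,A_{i-1})$ gives, using $kA_i=2kA_{i-1}$, the short exact sequence
\[
0\to\O_{Y_i}(mL-(2k+1)A_{i-1})\to\O_{Y_i}(mL-2kA_{i-1})\to\O_{Y_{i-1}}(mL-2kA_{i-1})\to 0 .
\]
Taking global images and endowing the three terms with the norms induced from $\xi^{(m)}_{\cdot s^k,\mathbf q}$, the surjectivity provided by (Y3) guarantees that the restriction to $Y_{i-1}$ is surjective and that its quotient norm coincides, up to a fixed comparison between $s^k$ and $(s')^{2k}$ (two sections of $2kA_{i-1}$ whose ratio has $\nu$-integrable log-norm), with the norm appearing at level $i-1$. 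The slope inequality for short exact sequences of adelic vector bundles \cite{adelic} then gives
\[
\mumin\big(\text{level } i\big)\ \ge\ \min\big(\mumin(\text{kernel}),\,\mumin(\text{level } i-1)\big)-O(\log(m+k)),
\]
and by the induction hypothesis, applied with the parameters $(m,2k)$ and $(m,2k+1)$ which are still linear in $m+k$, both terms on the right are of the form $C'(m+k)+T'$; the logarithmic defect is absorbed after slightly decreasing $C$.

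The main obstacle is the bookkeeping of error terms. One must verify that every comparison — the conversion between $A$-twists and $L$-twists via $t_1,t_2$ and $cA-L$, the passage from $s^k$ to $(s')^{2k}$, and the defect in the slope inequalities — contributes at most linearly in $m+k$ with coefficients given by convergent $\nu$-integrals, and that the polynomially growing dimensions enter only through $\log(m+k)=o(m+k)$ terms. The delicate point on which the induction rests is ensuring that the quotient norms on the restrictions genuinely coincide, up to integrable factors, with the inductively controlled norms; this is precisely where (Y3), and the Kawamata--Viehweg vanishing used to establish it, are indispensable.
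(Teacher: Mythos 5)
Your overall architecture (induction along the flag of Proposition \ref{prop_flag}, base case on the zero-dimensional $Y_0$ handled by trading $A$-twists for $L$-twists via $t_1,t_2$, all errors controlled by the integrable quantities $\tau(s),\sigma(s)$) matches the paper's, and your base case is essentially Ikoma's reduction that the paper invokes. But your inductive step has a genuine gap. In your short exact sequence
\[
0\to\mathscr{O}_{Y_i}(mL-(2k+1)A_{i-1})\to\mathscr{O}_{Y_i}(mL-2kA_{i-1})\to\mathscr{O}_{Y_{i-1}}(mL-2kA_{i-1})\to 0,
\]
the kernel still lives on $Y_i$, i.e.\ at level $i$, not at level $i-1$; yet you write that ``by the induction hypothesis, applied with the parameters $(m,2k)$ and $(m,2k+1)$\dots both terms on the right'' are controlled. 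The induction hypothesis only covers the quotient term on $Y_{i-1}$; applied to the kernel it is circular. A single restriction therefore does not close the induction: you must iterate the peeling, restricting to $Y_{i-1}$, then to $2Y_{i-1}$, $3Y_{i-1}$, and so on, raising the twist by $A_{i-1}$ each time, until the twist $lA_{i-1}$ is so large that the space on $Y_i$ vanishes. This is exactly where the constant $c$ (with $(cA-L)|_{Y_i}$ strictly effective) is indispensable \emph{in the inductive step}, not only in the base case as in your write-up: it guarantees termination after $O(m)$ steps, giving the identification of $H^0(X|Y_i,mL-kA_{i-1})$ with $H^0(X|(c'm-k)Y_{i-1},mL-kA_{i-1})$ and then a direct-sum decomposition $\bigoplus_{k\le l\le c'm}H^0(X|Y_{i-1},mL-lA_{i-1})$ (the paper's \eqref{eq_dim_red} and \eqref{eq_decomp}, the latter resting on Moriwaki's splitting maps $j_n$). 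One then takes the minimum of the inductively controlled slopes over all $l\le c'm$, which is still linear in $m+k$ because each bound is $C_{i-1}(m+l)+T_{i-1}$ with $l=O(m)$.

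Two smaller points. First, since the number of peeling steps grows linearly in $m$, you must check that the norms induced on the graded pieces of this length-$O(m)$ filtration are dominated by the intrinsic quotient norms $\xi^{(m)}_{\cdot s^{l},\mathbf q}$ at level $i-1$ (so that the minimal slope only improves piece by piece); this is the comparison $\nm^{(m)}_{\omega,\cdot s^k,\mathbf q,j_n}\leq\nm^{(m)}_{\omega,\cdot s^{k+n},\mathbf q}$ in the paper, and it is not the same as your ``fixed comparison between $s^k$ and $(s')^{2k}$'' (that comparison, the paper's Step 1, is needed separately to change the cutting section). Second, the slope inequality for extensions that the paper uses has no $\log(m+k)$ defect, which is fortunate because with $O(m)$ filtration steps even a logarithmic per-step loss would accumulate to something superlinear; your accounting of the defect as a one-off $o(m+k)$ term would not survive the iteration you actually need.
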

\begin{proof}
{\bf Step 1.} We show that if there is another $s'\in H^0(X,A_{i})\setminus\{0\}$, then there exists a constant $D(s,s')$ such that \begin{align}\label{ineq_section_not_depend}
    &\mumin(H^0(X|Y_i,mL-kA_{i}),\xi^{(m)}_{\cdot s^k,\bf q})\geq\\
    &\kern5em\mumin(H^0(X|Y_i,mL-kA_{i}),\xi^{(m)}_{\cdot s'^k,\bf q})+D(s,s')(m+k)
\end{align}
Note that if $L$ is not big, then \eqref{ineq_section_not_depend} holds trivially since $H^0(X|Y_i,mL-kA_i)=0$ if $k>0$.
If $L$ is big, we denote by $\delta_\omega$ the operator norm of $(H^0(X,mL-kA_i),\lVert\cdot\rVert^{(m)}_{\cdot s'^k,{\bf q}})\rightarrow(H^0(X,mL-kA_i),\lVert\cdot\rVert^{(m)}_{\cdot s^k,{\bf q}}).$
Take $r=t_2^{2^{i}b}\in H^0(X,2^{i}baL-A_{i})$. 
Then the following commutative diagram
\[ \begin{tikzcd}
H^0(X,mL) \arrow{r}{\cdot (s' r)^k}  & H^0(X,(m+2^{i-1}bak)L)  \\%
H^0(X,mL-kA_{i} \arrow[swap]{u}{\cdot s^k}) \arrow{r}{\cdot s'^k}& H^0(X,mL). \arrow{u}{\cdot (sr)^k}
\end{tikzcd}
\]
and \eqref{eq_bounded_type} yield that 
$$\delta_\omega\leq (\sigma_\omega (sr) \tau_\omega(s' r)^{-1})^{m+k}.$$
By \cite[Proposition 4.3.31(2)]{adelic}, we can set that $D(s,s')=\sigma(sr)-\tau(s'r).$

{\bf Step 2.} We proceed on induction of $i$. To be more precise, we will show that if the proposition holds for any $i<d$, then it holds for $i+1$. 

We set $c'=\lceil c/(2^{i}b)\rceil+1$.
For any $\rho\in\N$ and $0\leq i< d$, we denote by $\rho Y_i(\text{resp. }\rho \mathrm{div}(s_i))$ the closed subscheme $\mathrm{div}((s_i|_{Y_{i+1}})^{\rho})(\text{resp. }\mathrm{div}(s_i^{\rho}))$.
At first we can show that for any $0\leq k<c'm$,
\begin{align}
    H^0(X|Y_{i+1},mL-kA_i)&\simeq H^0(X|(c'm-k)Y_i,mL-kA_i)\label{eq_dim_red}\\
    &\simeq \mathop{\bigoplus}\limits_{k\leq l\leq c'm}H^0(X|Y_i,mL-lA_i)\label{eq_decomp}.
\end{align}
Indeed, for any $k<c'm$, consider the exact sequence
$$0\rightarrow m(L-c'A_{i})\xrightarrow{\cdot s_i^{c'm-k}} mL-kA_i\rightarrow (mL-kA_{i})|_{(c'm-k)\mathrm{div}(s_i)}\rightarrow 0.$$
We can see that the kernel
\begin{align*}
    &\mathrm{Ker}(H^0(X|Y_{i+1},mL-kA_i)\rightarrow H^0(X|(c'm-k)Y_{i},mL-kA_i))
\end{align*}
is a subspace of $H^0(X|Y_{i+1},m(L-c'A_i))$.
Notice that $((c'-1)A_i-L)|_{Y_{i+1}}$ is strictly effective, we have $H^0(X|Y_{i+1},m(L-c'A_i))=0$, which implies \eqref{eq_dim_red}.
Hence we only need to consider $(H^0(X|nY_{i},mL-kA_i),\xi^{m}_{\cdot s_i^k,\bf q})$ for $0<n\leq c'm$.
By \citep[Claim 3.5.8]{moriwaki2009convol}, there exists an injective homomorphism  
$$j_n:mL-(k+n)A_i|_{Y_i}\rightarrow mL-kA_i|_{(n+1)Y_i}$$
such that the following diagram is commutative:
\[ \begin{tikzcd}[every label/.append
  style={font=\tiny}]
0\arrow{r} &mL-(k+n)A_i \arrow{r}{\cdot s_i^n} \arrow{d} & mL-kA_i\arrow{r}\arrow{d} &(mL-kA_i)|_{n\mathrm{div}(s_i)}\arrow{r}\arrow{d} &0 \\
0\arrow{r} &(mL-(k+n)A_i)|_{Y_{i+1}} \arrow{r}{\cdot (s_i|_{Y_{i+1}})^n} \arrow{d} & (mL-kA_i)|_{Y_{i+1}} \arrow{r}\arrow{d} &(mL-kA_i)|_{nY_i}\arrow{r}\ar[equal]{d} &0 \\%
0\arrow{r} &(mL-(k+n)A_i)|_{Y_i} \arrow{r}{j_n}  & (mL-kA_i)|_{(n+1)Y_i} \arrow{r} & (mL-kA_i)|_{nY_i}\arrow{r} &0
\end{tikzcd}
\]
whose horizontal rows are exact.
Then we have the exact sequence
\begin{align*}
    &0\rightarrow H^0(X|Y_{i},mL-(k+n)A_i)\xrightarrow{j_n} H^0(X|(n+1)Y_{i},mL-kA_i)\\
    &\kern20em\rightarrow H^0(X|nY_{i},mL-kA_i)\rightarrow 0
\end{align*}
which implies \eqref{eq_decomp}.
Moreover, consider the commutative diagram:
\[ \begin{tikzcd}
H^0(X,mL-(k+n)A_i) \arrow{d}{\mathrm{quot}} \arrow{r}{\cdot s_i^n}& H^0(X,mL-kA_i) \arrow{d}{\mathrm{quot}}\\
H^0(X|Y_{i},mL-(k+n)A_i) \arrow{r}{j_n}  & H^0(X|(n+1)Y_{i},mL-kA_i).\end{tikzcd}
\]
It always holds that
$\nm^{(m)}_{\omega,\cdot s_i^k,{\bf q},j_n}\leq \nm^{(m)}_{\omega,\cdot s_i^{k+n},\bf q}$ for each $\omega\in\Omega$. We denote by $\xi^{(m)}_{\cdot s_i^k,{\bf q},j_n}$ the norm family $\left\{\nm^{(m)}_{\omega,\cdot s_i^k,{\bf q},j_n}\right\}_{\omega\in\Omega}$ on $H^0(X|{Y_i},mL-(k+n)A_i)$.
Then \begin{align*}
    &\mumin(H^0(X|{Y_i},mL-(k+n)A_i),\xi^{(m)}_{\cdot s_i^k,{\bf q},j_n})\\&\kern10em\geq \mumin(H^0(X|Y_{i},mL-(k+n)A_i),\xi^{(m)}_{\cdot s_i^{k+n},\bf q}).
\end{align*}
By the induction hypothesis, there exists constants $C_i,T_i$ such that $$\mumin(H^0(X|Y_{i},mL-kA_i),\xi^{(m)}_{\cdot s_i^{k},\bf q})\geq C_i (m+k)+T_i.$$
Hence \eqref{eq_dim_red} and \eqref{eq_decomp} give that \begin{align*}
    \mumin&(H^0(X|Y_{i+1},mL-kA_i),\xi^{(m)}_{\cdot s^k_i,\bf q})\\
    &\geq \min\limits_{k\leq l\leq c'm}\{\mumin(H^0(X|Y_{i},mL-lA_i),\xi^{(m)}_{\cdot s_i^{l},\bf q}) \}\\
    &\geq \min(C_i,0)(c'+1)m+T_i
\end{align*}
where the first inequality is due to \citep[Proposition 4.3.33]{adelic}. 

{\bf Step 3.} Now it suffices to prove the case that $i=0$. Notice that $\dim_K H^0(X|Y_0,mL-kA)$ is bounded, our statement can be proved by using \citep[Claim 1.4.5]{ikoma_boundedness}. We include the claim with an explicit description:
\begin{claim}
    There exists $m_0, k_0>0$ such that for any $m,k\geq 0$, there are $0\leq m'\leq m_0$ and $0\leq k'\leq k_0$ such that 
$$H^0(X|Y_0, m'L-k'A)\xrightarrow{\cdot (t_1 t_2)^{p} t_2^{ q}}H^0(X|Y_0, mL-kA)$$
is an isomorphism for some $m\geq p\geq 0$ and $q=k-k'$.
\end{claim}
We are thus done since
\begin{align*}
    &\mumin(H^0(X|Y,mL-kA),\xi^{(m)}_{\cdot t_1^k,\bf q})\\
    &\kern4em\geq \mumin(H^0(X|Y,m'L-k'A),\xi^{(m')}_{\cdot t_1^{k'},\bf q})+\min\{0,\sigma(t_1t_2)\}({m+k})\\
    &\kern4em\geq \min_{\substack{0\leq m'\leq m_0\\
    0\leq k'\leq k_0}}(\mumin(H^0(X|Y,m'L-k'A),\xi^{(m')}_{\cdot t_1^{k'},\bf q}))+\min\{0,\sigma(t_1t_2)\}({m+k})
\end{align*}
and $\mumin(H^0(X|Y,mL-kA),\xi^{(m)}_{\cdot s^k,\bf q})\geq \mumin(H^0(X|Y,mL-kA),\xi^{(m)}_{\cdot t_1^k,\bf q})+D(s,t_1)(m+k)$ by {\bf Step 1}.
\end{proof}

\section{Applications to function fields}\label{sec_fun_fld}
In this section, let $B$ be an $e$-dimensional, normal and projective variety over a base field $k$ of characteristic $0$. Let $K$ be the function field of $B$. We denote by $B^{(1)}$ the set of $1$-codimensional points in $Y$. Then each $\omega\in B^{(1)}$ gives an absolute value $\lvert\cdot\rvert_\omega:=\exp(-\mathrm{ord}_\omega(\cdot))$ on $K$ where $\mathrm{ord}_\omega(\cdot)$ is the order at $\omega$.
Let $\mathcal H=\{\mathscr H_1,\dots,\mathscr H_{e-1}\}$ be a collection of ample line bundles over $B$. Then we can equip $B^{(1)}$ with the discrete $\sigma$-algebra, and the measure $\nu(\cdot)$ such that $$\nu(\{\omega\}):=c_1(\mathscr H_1)\cdots c_1(\mathscr H_{e-1})[\omega]$$ for any $\omega\in B^{(1)}$. We therefore obtain a proper adelic curve $S$ from $B$. Since every place $\omega$ is non-Archimedean and non-trivial, we denote by $K_\omega^\circ$ the valuation ring of $K_\omega$, and by $K_\omega^{\circ\circ}$ the unique maximal ideal in $K_\omega^\circ$.

Throughout this section, we fix a projective and normal variety $\pi:X\rightarrow\mathrm{Spec}K$ of dimension $d$.
\subsection{Algebro-geometric setting vs adelic setting}
Let $\mathscr E$ be a coherent sheaf on $B$. 
We define the \textit{degree} of $\mathscr E$ as $$\degH(\mathscr E):=c_1(\mathscr H_1)\cdots c_1(\mathscr H_{e-1})c_1(\mathscr E)$$
where $c_1(\mathscr E)$ is defined as the cycle class of the determinant bundle $\det \mathscr E$ which is well-defined as in \cite[1.10]{moriwaki2014arakelov}.
The \textit{slope} $\mu(\mathscr E)$ of $\mathscr E$ is the quotient $\degH(\mathscr E)/\rank(E)$. The \textit{algebraic minimal slope} $\mu_{\min}(\mathscr E)$ is defined as
$$\mu_{\min}(\mathscr E)=\inf\{\mu(\mathscr G)\mid\mathscr G\text{ is a torsion free quotient sheaf of }\mathscr E\}.$$
The classical Harder-Narasimhan filtration theory can be applied to show that $\mu_{\min}(\mathscr E)$ is finite if $\mathscr E$ is a non-zero torsion free coherent sheaf. By convention, we set $\mu_{\min}(0):=+\infty$.

We can actually take an adelic point of view of this.
For any $\omega\in B^{(1)}$, we can equip a lattice norm $\nm_{\mathscr E,\omega}$ on $\mathscr E\ot K_\omega$\citep[4.6.3.1]{adelic}. The norm family $\xi_{\mathscr E}:=\{\nm_{\mathscr E,\omega}\}$ on $\mathscr E\ot K$ is automatically measurable and integrable. By the definition of first chern class, we can see $\ardeg(\mathscr E\ot_{B} K,\xi_{\mathscr E})=\degH(\mathscr E)$. Moreover we have the following:
\begin{lemm}
    Let $\mathscr E$ be a torsion free coherent sheaf over $B$. Then $\mumin(\mathscr E\ot_{B} K,\xi_{\mathscr E})=\mu_{\min}(\mathscr E)$.
\end{lemm}
\begin{proof}
    By \cite[Proposition 1.3.1]{moriwaki2008torsionfree}, there is a bijection between
    $$\{\text{saturated subsheaves of }\mathscr E,\text{ i.e. subsheaves }\mathscr F\text{ such that }\mathscr E/\mathscr F\text{ is torsion free}\}$$
    and $$\{\text{subspaces of }\mathscr E\ot_{B} K\}$$
    given by $\mathscr F\mapsto \mathscr F\ot_B K$.
    Therefore the map $\mathscr G\mapsto \mathscr G\ot_B K$ is a bijection between the set of torsion free quotient sheaves of $\mathscr E$ and the quotient spaces of $\mathscr E\ot_B K$. Since each quotient norm $\nm_{\mathscr E,\omega,{\bf q}}$ on $\mathscr G\ot_B K$ coincides with the lattice norm $\nm_{\mathscr G,\omega}$ due to \cite[Theorem 4.5]{luo2021relative}, we obtain that $$\widehat\mu(\mathscr G\ot_B K, \{\nm_{\mathscr E,\omega,{\bf q}}\})=\mu(\mathscr G),$$ which concludes the proof.
\end{proof}

Now let $L$ be a line bundle over $X$. Consider a $B$-model $(p:\scrX\rightarrow B,\shfL)$ of $(X,L)$, that is, $\scrX\times_{B}\mathrm{Spec}K\simeq X$ and $\shfL|_X\simeq L$. For each $\omega\in B^{(1)}$, we obtain a model metric $\phi_{\shfL,\omega}$ due to \citep[Proposition 7.5]{gubler1998local}. Then the pair $\ovl L:=(L,\phi_\shfL:=\{\phi_{\shfL,\omega}\})$ is an adelic line bundle.
By \citep[Lemma 6.3 and Theorem 6.4]{boucksom2021spaces}, for each $\omega\in\Omega$, there exists a constant $C_\omega\geq 1$ depending on the model $\scrX$ only such that 
$$\nm_{\phi_{\shfL,\omega}}\leq \nm_{p_*\shfL,\omega}\leq C_\omega\nm_{\phi_{\shfL,\omega}}
$$
where $\nm_{\phi_{\shfL,\omega}}$ is the supnorm induced by $\phi_{\shfL,\omega}$. Note that $C_\omega=1$ if the special fiber $\scrX_\omega:=\scrX\times_B \mathrm{Spec}(K_\omega^{\circ}/K_\omega^{\circ\circ})$ is reduced.
Since the generic fiber $X$ is reduced, $\scrX_\omega$ is reduced except for finitely many $\omega$\cite[12.2.4(v)]{egaIV3}. 
Hence there exists a constant $C_{\scrX}:=\sum_{\omega\in B^{(1)}}\nu(\{\omega\})\ln C_\omega$ depending only on $\scrX$ such that 
\begin{equation}\label{eq_comparison}
    \begin{cases}\ardeg(\pi_*\ovl L)\geq \degH(p_*\shfL)\geq \ardeg(\pi_*\ovl L)-C_{\scrX}h^0(X,L),\\
\mumin(\pi_*\ovl L)\geq \mu_{\min}(p_*\shfL)\geq \mumin(\pi_*\ovl L)-C_{\scrX}.
\end{cases}
\end{equation}
\subsection{Slope boundedness}
Let $(p:\scrX\rightarrow B,\shfL)$ be a $B$-model of $(X,L)$ such that $\scrX$ is normal. For each $n\geq 0$, let $\xi^{(n)}$ be the supnorm family induced by the model metric given by $n\shfL$.
We are going to show that $\{(H^0(X,nL),\xi^{(n)})\}$ is of bounded type.

For each $\omega\in B^{(1)}$, let $\scrX_\omega$ be the special fiber of $\scrX_{\mathrm{Spec}K_\omega^\circ}$. Let $\Gamma(\scrX_{\mathrm{Spec}K_\omega^\circ})$ be the Shilov boundary\cite[Proposition 2.4.4]{Berkovich}, that is, the reverse image of generic points of $\scrX_\omega$ through the reduction map $\xanomega\rightarrow \scrX_\omega$. Note that 
\begin{enumerate}
    \item[\textnormal{(a)}] $\Gamma(\scrX_{\mathrm{Spec}K_\omega^\circ})$ is finite\cite[Lemma 4.8]{boucksom2021spaces}.
    \item[\textnormal{(b)}] Each $x\in \Gamma(\scrX_{\mathrm{Spec}K_\omega^\circ})$ corresponds to a valuation on $\kappa(\eta)$, where $\eta$ is a generic point of $X_{K_\omega}$ and $\kappa(\eta)$ is the residue field.
\end{enumerate}
Hence for any $s\in H^0(X_{K_\omega},nL_{K_\omega})\setminus\{0\}$, we can set that $$\begin{cases}
\tau_\omega(s)=\min\{1,\min\limits_{\substack{x\in\Gamma(\scrX_{\mathrm{Spec}K_\omega^\circ})\\\lvert s\rvert_{n\shfL,\omega}(x)\not=0}}\{\lvert s\rvert_{n\shfL,\omega}(x)\}\}>0\\
\sigma_\omega(s)=\max\{1,\max\limits_{x\in\Gamma(\scrX_{\mathrm{Spec}K_\omega^\circ})}\lvert s\rvert_{n\shfL,\omega}(x)\}=\max\{1,\lVert s\rVert_{n\shfL,\omega}\}>0
\end{cases}$$
where the maximum and minimum can be obtained due to (a) above.
We say $r\in H^0(\scrX_{\mathrm{Spec}K_\omega^\circ},\shfL\ot K_\omega^\circ)$ is a \textit{relatively regular section} if $\mathrm{div}(r)$ is flat over $\mathrm{Spec}K_\omega^{\circ}$. Our definition is slightly different with it in \citep[A.6]{boucksom2021spaces}, but they are actually the same in the discretely valued case. Now for any $s\in H^0(X,nL)\setminus\{0\}$, we consider $s$ as a rational section of $\shfL$ over $\scrX$ since $K(X)\simeq k(\scrX)$. Then we have the decomposition $\mathrm{div}(s)=D_h+ D_{v}$ where $D_h$ is the horizontal part and $D_v$ is the part whose image under $p$ is of codimension at least $1$ in $B$. Therefore for any $\omega\not\in p(D_v)\cap B^{(1)}$, $s$ corresponds to a relatively regular section in $H^0(\scrX_{\mathrm{Spec}K_\omega^\circ},\shfL\ot K_\omega^\circ)$.
Note that $\lvert s\rvert_{\shfL,\omega}\equiv1$ on $\Gamma(\scrX_{\mathrm{Spec}K_\omega^\circ})$ if $s$ corresponds to a relatively regular section in $H^0(\scrX_{\mathrm{Spec}K_\omega^\circ},\shfL\ot K_\omega^\circ)$ due to \citep[Lemma 8.19]{boucksom2021spaces}. Hence $\tau_\omega(s),\sigma_\omega(s)$ are $1$ for all but finitely many $\omega\in B^{(1)}$, which implies that their logarithms are integrable. 
By the maximum modulus principle, for any $t\in H^0(X,mL)$, we have
\begin{align*}
    \lVert s^k t\rVert_{(kn+m)\shfL,\omega}&=\max\limits_{x\in\Gamma(\scrX_{\mathrm{Spec}K_\omega^\circ})}\lvert s^k t\rvert_{(kn+m)\shfL,\omega}(x)\\
    &\geq \max\limits_{x\in\Gamma(\scrX_{\mathrm{Spec}K_\omega^\circ})}\lvert t\rvert_{m\shfL,\omega}(x)\min\limits_{\substack{x\in\Gamma(\scrX_{\mathrm{Spec}K_\omega^\circ})\\\lvert s\rvert_{n\shfL,\omega}(x)\not=0}}\lvert s\rvert ^k_{n\shfL,\omega}(x)\\
    &\geq \tau_\omega(s)^{m+k}\lVert t\rVert_{m\shfL,\omega}.
\end{align*}
The other inequality is obvious. Therefore $\{(H^0(X,nL),\xi^{(n)})\}$ is of bounded type which gives the following:
\begin{theo}\label{theo_bound_slope_fun}
    Let $\ovl L=(L,\psi)$ be an adelic line bundle over $X$. Then $\mumin^{\mathrm{asy}}(\ovl L)\geq -\infty$.
\end{theo}
\begin{proof}
    Take a model $(\scrX,\shfL)$ of $(X,L)$ as above. 
    We have $$\mumin^{\mathrm{asy}}(\ovl L)\geq \mumin^{\mathrm{asy}}(L,\phi_{\shfL})-d(\psi,\phi_\shfL)>-\infty$$
where the second inequality is due to Theorem \ref{theo_bounded_slope}.
\end{proof}

In conclusion, the assumption on minimal slopes is automatically satisfied if we consider the specific case of Theorem \ref{theo_equidistribution_big} over a function field. Moreover, since we equip $B^{(1)}$ with the discrete $\sigma$-algebra, for each $\omega\in B^{(1)}$, we have the following:
$$\{f\in C_{B^{(1)}}(X)\mid f\cdot \mathds{1}_{\{\omega\}}=f\}\simeq C(\xanomega).$$
By this identification, both of $\mu_{x,\omega}:=\rest{\mu_x}{C(\xanomega)}$ and $\mu_{\ovl L,\omega}:=\rest{\mu_{\ovl L}}{C(\xanomega)}$ can be viewed as measures on $\xanomega$ with total mass $\nu(\{\omega\})$. Then Theorem \ref{theo_equidistribution_big} can be restated as follows:
\begin{theo}\label{theo_equi_fun}
Let $\ovl L=(L,\phi)$ be an adelic line bundle such that $L$ is big and nef, $\phi$ is semipositive.
Let $\{x_\iota\in X(\ovl K)\}_{\iota\in I}$ be a generic net of algebraic points on $X$ such that 
$$\lim_{\iota\in I} h_{\ovl L}(x_\iota)= \frac{\vol_\chi(\ovl L)}{(d+1)\mathrm{vol}(L)}.$$
Then for any $\omega\in B^{(1)}$, we have $\{\mu_{x_\iota,\omega}\}_{\iota\in I}$ converges weakly to $\mu_{\ovl L,\omega}.$
\end{theo}
\begin{rema}\label{rema_num_corps}
This technique of choosing $\tau_\omega$ and $\sigma_\omega$ can be also applied to the finite places of a number field. We refer the reader to \citep[3.2.2]{adelic} for the construction of adelic curves for number fields. We may replace the supnorm by a $L^2$-norm at infinite places for which we obtain $\tau_\omega$ and $\sigma_\omega$ due to \citep[Claim 1.2.4]{ikoma_boundedness}. Since the supnorm and $L^2$-norm are comparable due to the Gromov's inequality\citep[Lemma 30]{gillet1992arithmetic}. Therefore we have the boundedness of minimal slopes over number fields. This can be also obtained by just using \citep[Theorem 1.2.3]{ikoma_boundedness} and Minkowski's theorem.
Hence if we consider Theorem \ref{theo_equi_intro} over a number field, then we recover the equidistribution described in \citep{Boucksom_Growth}. Moreover, our result includes finite places as well.
\end{rema}

\subsection{Continuity of $\chi$-volume over function fields}\label{conti_vol_fun}
Let $\ovl L_1=(L_1,\phi_1),\cdots, \ovl L_r=(L_r,\phi_r)$ be adelic line bundles over $X$.
Let $a=(a_1,\cdots,a_r)\in \Z^r$. We set that 
$$\begin{cases}
\lVert a\rVert_1:=\lvert a_1\rvert+\cdots+\lvert a_r\rvert,\\
a\cdot {\bf\ovl L}=a_1 \ovl L_1+\cdots+a_r\ovl L_r,\\
a\cdot {\bf \phi}=a_1 \phi_1+\cdots+a_r\phi_r,\\
a\cdot{\bf L}:=a_1 L_1+\cdots+a_r L_r.
\end{cases}$$
\begin{prop}
There exists constants $S$ and $T$ such that 
$$\mumin(\pi_*(a\cdot {\bf\ovl L}))\geq \lVert a\rVert_1\cdot S+T.$$
\end{prop}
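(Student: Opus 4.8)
The plan is to reduce the multi-bundle statement to the single-bundle slope boundedness already established in Section 3. The key observation is that $\mumin^{\mathrm{asy}}(\ovl L) > -\infty$ for any adelic line bundle $\ovl L$ whose underlying $L$ is big, as shown in the previous subsection via the bounded-type property over function fields. The difficulty here is that $a \cdot {\bf L}$ need not be big (indeed some $a_i$ may be negative), so we cannot directly invoke a single bounded-type estimate; instead we must control the slope uniformly as $a$ ranges over all of $\Z^r$.

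First I would fix an auxiliary big and nef (even ample) line bundle $M$ on $X$, for instance arising from an ample $B$-model, together with a semipositive model metric $\psi_M$, so that $\ovl M = (M,\psi_M)$ is a model adelic line bundle of bounded type. The strategy is then to dominate each $\ovl L_i$ by multiples of $\ovl M$: choose an integer $N$ large enough that $N M \pm L_i$ is big (hence effective after a further twist) for every $i$, and such that the metric differences $d(\phi_i, \cdot)$ against the corresponding model metrics are absorbed into the constant $T$. This lets me write $a \cdot {\bf L}$ as a difference of two effective divisors each bounded in terms of $\lVert a \rVert_1 \cdot N M$, reducing to the behaviour of $\mumin(\pi_*(n \ovl M))$, which is bounded below linearly in $n$ by Theorem \ref{theo_bounded_slope}.

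Concretely, the key steps in order are: (1) produce the dominating $\ovl M$ and verify it is of bounded type over the function-field adelic curve, so $\mumin(\pi_*(n\ovl M)) \geq C n + T_0$ for constants $C, T_0$; (2) for each generator $\ovl L_i$, compare $\ovl L_i$ with $\pm N \ovl M$ using the slope-shift formula in Proposition \ref{prop_deg_posdeg}(a) and the metric-distance integrability from the definition of adelic line bundles, producing per-generator constants; (3) use the additivity of degrees under tensor product together with the injection $H^0(X, a \cdot {\bf L}) \hookrightarrow H^0(X, \lVert a \rVert_1 N \cdot M)$ coming from multiplication by a fixed section of the effective complementary bundle, and apply \cite[Proposition 4.3.31]{adelic} (the behaviour of minimal slopes under restriction of norms via an injection) to transfer the lower bound; (4) collect all the absolute-value and distance contributions, which scale at most like $\lVert a \rVert_1$, into the slope $S$ and the additive constant $T$.

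The main obstacle I expect is step (3): controlling the minimal slope of $\pi_*(a \cdot {\bf\ovl L})$ through an embedding into $\pi_*(\lVert a\rVert_1 N \ovl M)$ when the comparison maps are multiplications by fixed sections whose sup-norms contribute multiplicatively. The technical heart is ensuring that the correction between the quotient/restriction norms induced by the embedding and the intrinsic norm family $\xi_{a\cdot\phi}$ grows only linearly in $\lVert a\rVert_1$; this is exactly the kind of estimate the bounded-type inequality \eqref{eq_bounded_type} is designed to deliver, applied componentwise to the sections realizing the domination. Once the linear-in-$\lVert a\rVert_1$ bound on these correction terms is in place, combining it with the linear lower bound for $\mumin(\pi_*(n\ovl M))$ yields the desired $\mumin(\pi_*(a\cdot{\bf\ovl L})) \geq \lVert a\rVert_1 \cdot S + T$.
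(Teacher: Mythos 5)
Your overall plan---reduce to the single-bundle slope boundedness of Theorem \ref{theo_bounded_slope}, handle the signs of the $a_i$, and absorb the metric-distance corrections into terms linear in $\lVert a\rVert_1$---is the right shape, and steps (1), (2) and (4) match what the paper does. But step (3), which you yourself identify as the technical heart, contains a genuine gap: a lower bound on $\mumin$ does \emph{not} transfer from an ambient adelic vector bundle to a subspace equipped with the restricted norms. A quotient of the subspace is not a quotient of the ambient space, and in fact $\mumin$ of a subspace can be arbitrarily smaller than $\mumin$ of the whole (already for vector bundles on a curve: a semistable bundle of slope $0$ contains saturated line subbundles of arbitrarily negative degree). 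The result you invoke, \cite[Proposition 4.3.31]{adelic}, bounds $\mumin$ of the \emph{middle} term of a short exact sequence from below by the sub and the quotient; it says nothing about bounding $\mumin$ of a sub-object by the ambient one. So the injection $H^0(X,a\cdot{\bf L})\hookrightarrow H^0(X,\lVert a\rVert_1 N M)$ by multiplication by a fixed section, however well you control the norm distortion via \eqref{eq_bounded_type}, gives no lower bound on $\mumin(\pi_*(a\cdot{\bf\ovl L}))$. This is precisely the difficulty that forces the quotient-based flag argument in the proof of Theorem \ref{theo_bounded_slope} itself.

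The paper circumvents this by arranging for $H^0(X,a\cdot{\bf L})$ to sit inside a larger space as a \emph{direct summand} rather than a mere subspace: it takes a normal $B$-model with $\mathcal V=\shfL_1\oplus\cdots\oplus\shfL_r$, forms $\mathbb{P}_\scrX(\mathcal V)$ with $\shfM=\O(1)$, and uses $\mathrm{bl}_*(m\shfM)=\bigoplus_{\lVert a\rVert_1=m,\,a\in\Z^r_{\geq 0}}p_*(a\cdot\shfL)$. For a direct summand every quotient \emph{is} a quotient of the whole, so $\mu_{\min}(p_*(a\cdot\shfL))\geq\mu_{\min}(\mathrm{bl}_*(m\shfM))\geq S_0 m+T_0$, the last inequality coming from the slope boundedness applied to the single line bundle $\shfM$; negative exponents are handled by rerunning the construction with $\shfL_i$ replaced by $-\shfL_i$, and the passage back to $\xi_{a\cdot\phi}$ costs only $C_\scrX+\lVert a\rVert_1\delta$ as you anticipated. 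If you want to repair your argument you must replace the sub-object comparison in step (3) by a summand or quotient comparison of this kind; as written, the key transfer step fails.
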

\begin{proof}

Let $p:\scrX\rightarrow B$ be a normal $B$-model of $X$. Let $\shfL_1,\cdots,\shfL_r$ be line bundles over $\scrX$ such that $\shfL_i|_X\simeq L_i$ for each $i$.
Let $\delta=\max\limits_{1\leq i\leq r}\{d(\phi_i,\phi_{\shfL_i})\}$.
Let $\mathcal V=\shfL_1 \oplus \cdots \oplus \shfL_r$ and $V=L_1\oplus \cdots \oplus L_r$. Then $\mathrm{bl}:\mathbb{P}_\scrX(\mathcal V):=Proj_{\scrX}(\mathrm{Sym}(\mathcal V))\rightarrow B$ is a $B$-model of $\mathbb{P}_{X}(V):=Proj_{X}(\mathrm{Sym}(V)),$ where $\mathrm{Sym}(\cdot)$ denotes the graded symmetric algebra\cite[Exercise 2.5.16]{Hartshorne}. We consider the tautological line bundle $\shfM:=\O_{\mathbb{P}_\scrX(\mathcal V)}(1)$ which is a $B$-model of $M:=\O_{\mathbb{P}_{X}(V)}(1)$. Denote that 
$$a\cdot {\shfL}:=a_1\shfL_1+\cdots+a_r\shfL_r$$
for $a\in \N^{r}.$
Since $\mathcal V$ is locally free, we have
$$\mathrm{bl}_*(m\shfM)=\mathop{\bigoplus}\limits_{\lVert a\rVert_1=m,a\in \N^r} p_*(a\cdot\shfL).$$
Then Theorem \ref{theo_bound_slope_fun} and \eqref{eq_comparison} show that there exists constants $S_0,T_0$ such that \begin{equation}\label{eq_min_fun_bound_uniform}
    \mu_{\min}(p_*(a\cdot \shfL))\geq \mu_{\min}(\mathrm{bl}_*(\lVert a\rVert_1\shfM))\geq \lVert a\rVert_1\cdot S_0+T_0
\end{equation}
for any $a\in \Z^r_{\geq 0}$. For each $\sigma:\{1,2,\cdots, r\}\rightarrow \{1,-1\}$, we may replace $\shfL_i$ by $\sigma(i)\shfL_i$ for $1\leq i\leq r$, and apply the same reasoning, we can see that \eqref{eq_min_fun_bound_uniform} still holds for some other constants $S_0(\sigma)$ and $T_0(\sigma)$.
By abuse of notation, we replace $S_0$ and $T_0$ by $\min_{\sigma} S_0(\sigma)$ and $\min_{\sigma}T_0(\sigma)$ respectively. Then \eqref{eq_min_fun_bound_uniform} holds for any $a\in \Z^r$.

We denote by $\xi_{a\cdot \shfL}$ the supnorm family on $H^0(X,a\cdot{\bf L})$ induced by the model metric family $\phi_{a\cdot\shfL}$. Then by the comparison \eqref{eq_comparison}, there exists a constant $C_\scrX$ depending only on $\scrX$ such that for any $a\in\Z^r$
$$\mumin(H^0(X,a\cdot{\bf L}),\xi_{a\cdot\shfL})\geq\mu_{\min}(p_*(a\cdot \shfL))-C_\scrX.$$
We set $$\delta:=\max\limits_{1\leq i\leq r}d(\phi_i,\phi_{\shfL_i}).$$
Then \begin{align*}
    \mumin(\pi_*(a\cdot{\bf\ovl L}))&=\mumin(H^0(X,a\cdot{\bf L}),\xi_{a\cdot{\bf\phi}})\\
    &\geq \mumin(H^0(X,a\cdot{\bf L}),\xi_{a\cdot\shfL})-\lVert a\rVert_1 \delta.
\end{align*}
We are thus done by setting $S:=S_0-\delta$ and $T:=T_0-C_\scrX$.
\end{proof}
Now we finished the preparation for the proof of continuity of $\chi$-volume on $\widehat{\Div}_\Q(X)$. From now on, we further assume that $X$ is geometrically integral.
\begin{theo}\label{theo_conti_chi_vol}
Let $\overline D=(D,g)$, $\overline E_1=(E_1,h_1)$, $\dots, \overline E_r=(E_r,h_r)$ be adelic $\Q$-Cartier divisors on $X$. Then we have the following continuity:
\begin{equation}\label{eq_conti_semiample}\lim_{\substack{\lvert\epsilon_1\rvert+\cdots+\lvert\epsilon_r\rvert\rightarrow 0\\\epsilon_i\in\Q}}\vol_\chi(\overline D+\sum_{i=1}^r\epsilon_i \overline E_i)=\vol_\chi(\overline D).\end{equation}
\end{theo}
\begin{proof}
We may assume that all $D$ and $E_i$ are Cartier divisors. 
Then there exists constants $S$ and $T$ depending on $\overline D$ and $\overline E_1,\dots,\overline E_r$, such that
$$\mumin(H^0(X,n_0 D+\sum_{i=1}^r n_i E_i),\xi_{n_0g+\sum_{i=1}^r n_i h_i})\geqslant T+S\sum_{i=0}^{r}\lvert n_i\rvert$$
where $n_i\in \Z$.
For arbitrary $\epsilon_i\in\Q$, we write that $\epsilon_i=p_i/q_i$ where $p_i$ and $q_i$ are coprime integers and $q_i\geqslant 1$. Let $q=\prod_{i=1}^r q_i$, then it holds that
$$\mumin(H^0(X,mq(D+\sum_{i=1}^r \epsilon_i E_i)),\xi_{mq(g+\sum_{i=1}^r \epsilon_i h_i)})\geqslant T+Smq(1+\sum_{i=1}^r \lvert\epsilon_i\rvert)$$
for every $m\in\N$.
Therefore
$$\mumin^{\sup}(\overline D+\sum_{i=1}^r \epsilon_i \overline E_i)\geqslant S(1+\sum_{i=1}^r\lvert\epsilon_i\rvert).$$
Take $\nu$-integrable functions $\phi$ such that
$\displaystyle\int_\Omega \phi \nu(d\omega) >-S$. Denote that $\epsilon=(\epsilon_i)_{1\leq i\leq r}$ and $\lVert\epsilon\rVert_1=\sum_{i=1}^r \lvert\epsilon_i\rvert$. It holds that
\begin{equation}\label{eq_vol_chi_semiample}\vol(\overline D+\sum_{i=1}^r \epsilon_i \overline E_i+(0,(1+\lVert\epsilon\rVert_1)\phi))=\vol_\chi(\overline D+\sum_{i=1}^r \epsilon_i \overline E_i+(0,(1+\lVert\epsilon\rVert_1)\phi)).\end{equation}
Due to the continuity of $\vol(\cdot)$\citep[Theorem 6.4.24]{adelic}, (\ref{eq_conti_semiample}) can be easily derived from (\ref{eq_vol_chi_semiample}).
\end{proof}

The reason we consider the continuity of $\vol_\chi(\cdot)$ is that we can deduce the Hilbert-Samuel formula for nef adelic line bundles. Before we get into that, we introduce the arithmetic intersection theory over function fields. We say an adelic line bundle $\ovl L$ is \textit{integrable} if we can write $\ovl L=(L_1,\phi_1)-(L_2,\phi_2)$ where $L_i$ are ample, $\phi_i$ are semipositive and $(L_i,\phi_i)$ are adelic line bundles for $i=1,2$. We also say the corresponding adelic Cartier divisor is integrable.
We denote by $\widehat{\mathrm{Int}}(X)$ the set of all integrable adelic Cartier divisors. Let $\widehat{\mathrm{Int}}_\Q(X)$ be the subspace of $\widehat{\Div}_{\Q}(X)$ generated by $\widehat{\mathrm{Int}}(X)$.
We can define a multi-linear form \begin{align*}
    \widehat{\mathrm{Int}}_\Q(X)^{d+1}&\rightarrow \R,\\
    (\ovl D_1,\cdots, \ovl D_{d+1})&\mapsto \ovl D_1\cdots \ovl D_{d+1}.
\end{align*}
We have several ways of defining this intersection number over function fields. For example, consider it as a limit of geometric intersection numbers as in \citep{Faber_2009,luo2021relative}. Or we can calculate the intersection number as a resultant as in \citep{huayimoriwaki_equi}. For an integrable adelic line bundle $\ovl L$, we denote its self intersection number as $\widehat c_1(\ovl L)^{d+1}$. The normalized one $\displaystyle h_{\ovl L}(X):=\frac{\widehat c_1(\ovl L)^{d+1}}{(d+1)c_1(L)^d}$ is called the \textit{height} of $X$ with respect to $\ovl L$. If $L$ is ample and $\phi$ is semipositive, due to \citep[5.5.1]{huayimoriwaki_equi}, we have $$\widehat{c}_1(\ovl L)^{d+1}=\vol_\chi(\ovl L).$$
In the function field case, the above equation can be also proved by using the Grothendieck-Riemann-Roch theorem.
For the case that $L$ is nef, we give the following:
\begin{prop}\label{prop_hil_sam}
Let $\ovl L=(L,\phi)$ be an adelic line bundle such that $L$ is nef and $\phi$ is semipositive. Then we also have the equation $\widehat{c}_1(\ovl L)^{d+1}=\vol_\chi(\ovl L)$.
\end{prop}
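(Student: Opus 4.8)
The plan is to reduce to the already-known ample case by an ample perturbation and then pass to the limit, controlling both sides by continuity. Since $L$ is nef, for any ample line bundle $A$ the sum $L+\epsilon A$ is ample for every rational $\epsilon>0$; fixing a semipositive metric $\phi_A$ on $A$ (for instance a nef model metric attached to a $B$-model of $A$), the adelic line bundle $\ovl L+\epsilon\ovl A=(L+\epsilon A,\phi+\epsilon\phi_A)$ is then ample with semipositive metric, because semipositivity is preserved under positive scaling and sums. In particular $\ovl L=(\ovl L+\epsilon\ovl A)-\epsilon\ovl A$ is a difference of two ample semipositive adelic line bundles, so $\ovl L\in\widehat{\mathrm{Int}}(X)$ and the self-intersection number $\widehat c_1(\ovl L)^{d+1}$ is well defined.

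For each rational $\epsilon>0$ the ample case (recalled just before the statement, after \cite[5.5.1]{huayimoriwaki_equi}, and extended to $\Q$-coefficients by clearing denominators and using the degree-$(d+1)$ homogeneity of both sides) gives
\[
\widehat c_1(\ovl L+\epsilon\ovl A)^{d+1}=\vol_\chi(\ovl L+\epsilon\ovl A).
\]
It remains to show that both sides converge, as $\epsilon\to 0^+$, to the desired quantities. For the left-hand side I would expand the self-intersection number by multilinearity of the arithmetic intersection pairing on $\widehat{\mathrm{Int}}_\Q(X)$,
\[
\widehat c_1(\ovl L+\epsilon\ovl A)^{d+1}=\sum_{j=0}^{d+1}\binom{d+1}{j}\epsilon^{\,j}\,\widehat c_1(\ovl L)^{d+1-j}\cdot\widehat c_1(\ovl A)^{j},
\]
which is a polynomial in $\epsilon$ with constant term $\widehat c_1(\ovl L)^{d+1}$; hence the left-hand side tends to $\widehat c_1(\ovl L)^{d+1}$.

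For the right-hand side I would translate $\ovl L$ and $\ovl A$ into adelic $\Q$-Cartier divisors (using the normality and geometric integrality of $X$ together with the Green-function/metric correspondence of the preliminaries) and invoke the continuity of $\vol_\chi(\cdot)$ established in Theorem \ref{theo_conti_chi_vol}, with $\ovl D=\ovl L$ and the single perturbing divisor $\ovl E_1=\ovl A$. This yields $\lim_{\epsilon\to 0^+}\vol_\chi(\ovl L+\epsilon\ovl A)=\vol_\chi(\ovl L)$. Combining the two limits with the ample-case identity gives $\widehat c_1(\ovl L)^{d+1}=\vol_\chi(\ovl L)$.

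The genuinely substantive input is the continuity of the $\chi$-volume, which is exactly Theorem \ref{theo_conti_chi_vol} and has already been proved; once it is available the argument above is essentially formal. The remaining points requiring care — and the most likely source of hidden hypotheses — are the well-definedness of $\widehat c_1(\ovl L)^{d+1}$ for a merely nef (possibly non-big) $L$, which is secured by realizing $\ovl L$ as a difference of ample semipositive bundles, and the genuine multilinearity (hence polynomial dependence on $\epsilon$) of the arithmetic intersection number, which makes the left-hand limit immediate.
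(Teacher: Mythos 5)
Your proposal is correct and follows essentially the same route as the paper: perturb by an ample semipositive adelic line bundle $\ovl A$, apply the ample-case identity $\vol_\chi(\ovl L+\epsilon\ovl A)=\widehat c_1(\ovl L+\epsilon\ovl A)^{d+1}$ for rational $\epsilon>0$, and let $\epsilon\to 0$ using Theorem \ref{theo_conti_chi_vol} on one side and the polynomial dependence of the intersection number on $\epsilon$ on the other. The paper's proof is just a terser version of the same argument; your added remarks on the well-definedness of $\widehat c_1(\ovl L)^{d+1}$ via the decomposition into ample pieces and on multilinearity simply make explicit what the paper leaves implicit.
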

\begin{proof}
Let $\ovl A=(A,\psi)$ be an adelic line bundle such that $A$ is ample and $\psi$ is semipositive. Then for any $\epsilon\in\Q_{>0}$, we have $\vol_\chi(\ovl L+\epsilon\ovl A)=\widehat{c}_1(\ovl L+\epsilon\ovl A)^{d+1}$ in the sense of adelic $\Q$-Cartier divisors. Letting $\epsilon\rightarrow 0$, we are done by Theorem \ref{theo_conti_chi_vol}.
\end{proof}

Hence we can see that in Theorem \ref{theo_equi_fun}, the term $\displaystyle\frac{\vol_\chi(\ovl L)}{(d+1)\mathrm{vol}(L)}$ is just the height of $X$ with respect to $\ovl L$.
\bibliographystyle{plain}
\bibliography{mybibliography}
\end{document}